\g@addto@macro{\endabstract}{\@setabstract}
\newcommand{\authorfootnotes}{\renewcommand\thefootnote{\@fnsymbol\c@footnote}}%
\newtheorem{theorem}{Theorem}[section]
\newtheorem{proposition}[theorem]{Proposition}
\newtheorem{lemma}[theorem]{Lemma}
\theoremstyle{definition}
\theoremstyle{remark}
\numberwithin{equation}{section}
\newcommand{\R}{\mathbf{R}}  
\def\R{{\mathbb{R}}}
\def\dxi{{{\mathrm d}\xi}}
\def\dt{{{\mathrm d} t}}
\def\dx{{{\mathrm d} x}}
\def\dpp{{{\mathrm d} p}}
\newcommand{\cW}{\mathcal{W}}
\newcommand{\cWe}{\cW_\varepsilon}
\newcommand{\sWe}{\mathscr{W}_\varepsilon}
\newcommand{\cG}{\mathcal{G}}
\newcommand{\sG}{\mathscr{G}}
\newcommand{\wxi}{\widetilde{\xi}}
\newcommand{\Dt}{{\delta t}}
\newcommand{\rme}{\mathrm{e}}
\newcommand{\Id}{I} 
\renewcommand{\wp}{\widetilde{p}}
\title{Friction-adaptive descent: a family of dynamics-based optimization methods}
\begin{document}
\maketitle
\begin{center}
  \normalsize
  \authorfootnotes
  Aikaterini Karoni\footnote{katerinakaron@gmail.com}\textsuperscript{1}, Benedict Leimkuhler\footnote{b.leimkuhler@ed.ac.uk}\textsuperscript{1},
  Gabriel Stoltz\footnote{gabriel.stoltz@enpc.fr}\textsuperscript{2} \par \bigskip

  \textsuperscript{1}School of Mathematics, University of Edinburgh, Edinburgh EH9 2NX, Scotland \par
  \textsuperscript{2}CERMICS, Ecole des Ponts, Marne-la-Vall\'ee, France \& MATHERIALS team-project, Inria Paris, France\par \bigskip

\end{center}



%
%
%

\begin{abstract}
We describe a family of descent algorithms which generalizes common existing schemes used in applications such as  neural network training and more broadly for optimization of smooth functions--potentially for global optimization, or as a local optimization method to be deployed within global optimization schemes.  By introducing an auxiliary degree of freedom we create a dynamical system with improved stability, reducing oscillatory modes and accelerating convergence to minima. The resulting algorithms are simple to implement, and convergence can be shown directly by Lyapunov's second method.

Although this framework, which we refer to as friction-adaptive descent (FAD), is fairly general, we focus most of our attention on a specific variant: kinetic energy stabilization (which can be viewed as a zero-temperature Nos\'e--Hoover scheme with added dissipation in both physical and auxiliary variables), termed KFAD (kinetic FAD).  To illustrate the flexibility of the FAD framework we consider several other methods. In certain asymptotic limits, these methods  can be viewed as introducing cubic damping in various forms; they can be more efficient than linearly dissipated Hamiltonian dynamics (LDHD).

We present details of the numerical methods and show convergence for both the continuous and discretized dynamics in the convex setting by constructing Lyapunov functions. The methods are tested using a toy model (the Rosenbrock function).  We also demonstrate the methods for structural optimization for atomic clusters in Lennard--Jones and Morse potentials. The experiments show the relative efficiency and robustness of FAD in comparison to LDHD.

\end{abstract}

\keywords{\textbf{Key words.} Convex and nonconvex optimization, Nos\'e--Hoover dynamics, Adaptive Langevin, gradient descent, Polyak heavy ball method, Hamiltonian system, cubic damping}

\subjclass{\textbf{AMS subject classifications.} 37C75, 37M05, 37M22, 
65L05}

\section{Introduction}

\label{sec:intro}
We study the design of extended systems for exploring energy (or loss) landscapes in high dimensions.  We suppose that we are given some objective function $f:\R^d \rightarrow \R$, differentiable and bounded below, and the goal is to find a local (or possibly global) minimum of $f$. To facilitate analytical study, we will assume that the function $f$ is convex, so that the minimizer is unique.  Ultimately we expect such local minimization strategies to be deployed as part of more complicated global optimization procedures or as part of stochastic optimization methods.   

There is a wide body of literature available on optimization methods (see e.g. \cite{BoVa2004,NoWr2006}), but we focus on the narrower class of iterative methods which are constructed by discretization of continuous dynamical systems.  In this work we assume that the gradient of $f$ is tractable, even if costly to evaluate. A standard iterative scheme to find local minima of~$f$ problem is gradient descent \cite{nesterov2003introductory}, which generates the sequence of iterates $x_1, x_2, \ldots$, beginning from a given initial condition~$x_0$, via 
\[
x_{n+1} = x_n - \delta t \nabla f(x_n).
\]
This method can be viewed as the Euler discretization of the continuous dynamics~$\dot{x} = -\nabla f(x)$. Gradient descent is an effective optimization strategy in many situations. Combined with the concept of stochastic gradients from Bayesian data analysis, it is widely used in deep learning (see for instance~\cite[Chapter~8]{Goodfellow-et-al-2016}).  However, it has limitations including slow convergence and instability (for large values of $\delta t$).   One way to increase the flexibility of iterative methods is to add a vector of momenta ($p\in \R^d$), that is to treat the extended system with state described by $(x,p)$ in which successive iterates are generated from
\[
(x_{n+1}, p_{n+1}) = T_{\delta t} (x_n,p_n),
\]
where $T_{\delta t}$ represents an approximation of the time $\delta t$ flow map of some suitable system.  For example $T_{\delta t}$ might be obtained as a numerical discretization of
\begin{align}
\frac{\dx}{\dt}  & = p, \label{eq:lindiss-1}\\
\frac{\dpp}{\dt}  & = -\nabla f(x) - \gamma p. \label{eq:lindiss-2}
\end{align}
Here $\gamma >0$ is a parameter which ensures that $p\rightarrow 0$ over time and which ultimately guarantees the approach to a local minimum of $f$.  This idea of extending the system is very natural by reference to Newtonian mechanics.  Define the energy $H(x,p) = \|p\|^2/2 + f(x)$ (where $\|p\|^2/2$ represents the kinetic energy and $f(x)$ the potential energy), then (\ref{eq:lindiss-1})-(\ref{eq:lindiss-2}) represents a physical system subject to linearly dissipative forces, i.e. mechanical damping.    Upon introducing finite differences into (\ref{eq:lindiss-1})-(\ref{eq:lindiss-2}) one arrives at various discrete procedures.  In the optimization community, this is typically referred to as Polyak's heavy ball method \cite{Po1964}, although we here use the term linearly dissipated Hamiltonian descent (LDHD). A variant of this approach, corresponding to a particular discretization of (\ref{eq:lindiss-1})-(\ref{eq:lindiss-2}), is termed Nesterov accelerated gradient~\cite{Nesterov}.

Since the goal of extending the gradient dynamics~$\dot{x}=-\nabla f(x)$ by introducing new variables is to solve the optimization problem (or obtain new candidate optimization schemes), there is nothing that anchors us to physically motivated dynamics.  It is  natural to further generalize the framework in various ways.  The only essential criteria  are the following:
\begin{enumerate}[1.]
\item correctness -- the extended system should have accessible mathematical properties and allow for a thorough convergence analysis; :
\item efficiency -- the extended system should be easy to implement and computationally tractable in the sense of adding minimal overhead in the computation of a timestep compared to, say, gradient descent, and not less robust in practice;  
\item value -- at the end of the day, there should be some tangible benefit from the added complexity of the extension.
\end{enumerate}

A family of optimization methods of this type was recently studied in \cite{maddison2018hamiltonian}, including the use of more general kinetic energies~$k(p)$ and models for dissipation~$D(p)$, resulting from discretization of
\begin{align}
\frac{\dx}{\dt}  & = \nabla k(p), \label{gen-1}\\
\frac{\dpp}{\dt}  & = -\nabla f(x) - D(p). \label{gen-2}
\end{align}
One of the key outputs of the study of~\eqref{gen-1}-\eqref{gen-2} is that, by carefully choosing~$k$ as a symmetrization of the convex conjugate of~$f$ while keeping the usual friction~$D(p) = p$, a geometric convergence rate can be obtained for strongly convex and smooth functions~$f$, with a convergence rate which does not depend on the upper/lower bounds on the Hessian. The convergence result can in fact be extended to non-smooth or non-strongly convex functions, and to discretizations of~\eqref{gen-1}-\eqref{gen-2}.  Motivation for general kinetic energies comes from works like~\cite{StTr2018} which have shown potential for acceleration in the sampling context.

The reasons for using the straightforward generalization (\ref{gen-1})-(\ref{gen-2}) are related to the criteria 1-3.  It is easy to construct Lyapunov functions for (\ref{gen-1})-(\ref{gen-2}), numerical implementation can be performed easily, and there are some clear wins--examples where this framework proves beneficial. However, the choice of $D(p)$ is tied to~$D(p)=\gamma p$ in~\cite{maddison2018hamiltonian} in order to guarantee a convergence rate independent of the strong convexity and smoothness parameters of~$f$ (see in particular Remark~1 there).   The linear dissipation case corresponds to the class of models known as ``conformal Hamiltonian systems" which implies a geometric principle underpinning the flow of the system \cite{McPe2001,BhFlMo2016}, although this property is not much exploited in optimization treatments. All numerical experiments in \cite{maddison2018hamiltonian} focused on $D(p) = \gamma p$.

Another way of generalizing LDHD is to consider the damping coefficient as a variable parameter.  An example of this is the method of Su, Boyd and Candes \cite{Su2016}, which generalizes Nesterov descent by replacing $\gamma$ with $\gamma(t) \propto t^{-1}$.  A recent work of Moucer, Taylor and Bach \cite{mo2022} demonstrated convergence for such methods using Lyapunov functions.  There have also been a few approaches introduced in the 
machine learning literature (see, e.g., \cite{Su2021}) to  determine what they term the `momentum parameter', akin to  adjusting $\gamma$ during the descent process.  From our perspective, these schemes are  heuristic and sacrifice the intuitive foundation of the Polyak model. 

In this article, we consider generalizations that go beyond (\ref{gen-1})-(\ref{gen-2}) by introducing additional variables along with nonlinear negative feedback loop control laws.    This framework is suggested in work of Maxwell \cite{Maxwell} and Wiener \cite{Wiener}, but the methods studied here are more directly related to the Nos\'{e}--Hoover sampling methods \cite{NH1,NH2,NH3} that are widely used in molecular modelling.  These are `thermostats' that regulate the kinetic energy of system.   In the case of a standard Hamiltonian system, the Nos\'{e}--Hoover thermostat introduces an adaptive friction coefficient
\begin{align}
\frac{\dx}{\dt}  & = p, \label{nh-1}\\
\frac{\dpp}{\dt}  & = -\nabla f(x) - \xi p, \label{nh-2}\\
\frac{\dxi}{\dt}  & = \|p\|^2 - \beta^{-1}, \label{nh-3}
\end{align}
where $\beta$ is a parameter (which can be viewed as a ``reciprocal temperature'').   In contrast to some other frequently encountered extended systems (e.g. generalized Langevin equation \cite{GLE}) the coupling between auxiliary variables and physical ones in (\ref{nh-1})-(\ref{nh-3}) is {\em nonlinear} which potentially complicates both numerics and analysis.

It can be shown that (\ref{nh-1})-(\ref{nh-3}) preserves the probability measure with density 
\[
\rho(x,p,\xi) = Z^{-1} \exp \left (-\beta \left [\|p\|^2/2 + f(x) + \xi^2/2 \right ] \right ),
\]
where $Z$ is a normalization constant so that $\int_{\R^{2d+1}} \rho(x,p,\xi) \dx \, \dpp \, \dxi =1$. This probability measure can be interpreted as an extended canonical ensemble in statistical physics.
In case the system is ergodic, the invariant probability measure is unique and the law of the process converges to it in the long time limit.  The configurational marginal of $\rho$ is proportional to $\exp(-\beta f(x))$, thus the extended system allows sampling of a target canonical distribution.  It is intuitively apparent that when $\beta$ is large (``low temperature'') the system will be driven to the vicinity of a local minimum.  One may also see the direct control of the kinetic energy as a potentially desirable stabilizing feature during optimization.  

When (\ref{nh-3}) is augmented by linear dissipation and stochastic forcing, the resulting Nos\'{e}--Hoover--Langevin system \cite{LeNoTh2008} also can be shown to converge to the extended canonical state. The thermostatting idea can be generalized in a variety of ways, for example incorporating coordinate-dependent projections in the coupling between auxiliary variables and physical ones \cite{JiaLe2005}, an idea that we exploit later in this paper.
It can also be shown that adding noise and linear dissipation in the physical variables ($x,p$) to the Nos\'e--Hoover thermostat maintains
the canonical state \cite{JoLe2011}.   Despite the nonlinearity, numerical integration of the Nos\'{e}-Hoover system and its generalizations can be performed using straightforward splitting methods.    Moreover, theoretical analyses are now available for various forms of Nos\'{e} dynamics (see, for example, \cite{LeSaSt2020} for a hypercoercivity analysis of the Adaptive Langevin method, or~\cite{herzog2018exponential} for a Lyapunov analysis).  Drawing on these works, we explore models that incorporate thermostat-type control laws which can help to restrict or guide the approach to minima.   In contrast to existing works on thermostatting, we include linear dissipation in both physical and auxiliary variables.  

While there are many ways to embed a given optimization problem in a higher dimensional dynamical framework, our aim here is to keep the iteration steps as simple as possible, involving only explicit calculations and simplifying convergence analysis.  This leads us to consider methods which can be viewed as simple modifications of Hamiltonian dynamics:
\begin{align*}
\dot{x} & = p,\\
\dot{p} & = -\nabla f(x) - \xi \Phi(x,p) - \gamma p,\\
\dot{\xi} & = p\cdot \Phi(x,p).
\end{align*}
Computing the orbital derivative of the extended Hamiltonian 
\[
\tilde{H}(x,p,\xi) = \|p\|^2/2 + f(x) + \xi^2/2
\]
with respect to this system results in 
\[
\frac{d}{dt}\left[ \tilde{H}(x,p,\xi)\right] =  - \gamma \|p\|^2 -\xi p \cdot \Phi +\xi p \cdot \Phi = -\gamma \|p\|^2.
\]
Thus, if $\gamma >0$, 
\[
\frac{d}{dt}\left[ \tilde{H}(x,p,\xi)\right] \leq 0,
\]
with equality only when $p=0$.  The function $\tilde{H}$ is thus a weak Lyapunov function. If $\Phi$ is such that $\Phi(x,0)=0$, then any state of the form~$(x,0,\xi)$ with $\nabla f(x) = 0$ is a critical point of the dynamical system. 

The convergence here appears to rely on the linear dissipation in the physical momentum variable.   However, in many cases the method will be convergent even with $\gamma =0$.   To see this, consider the choice $\Phi(x,p) = p$, so with $\gamma=0$ we have standard Nos\'{e}--Hoover dynamics at zero temperature.   Assuming $\xi(0)>0$, we have that $\xi$ is monotone increasing and remains positive, since  $\dot{\xi} = \|p\|^2 \geq 0$.   The monotonicity is undesirable (it adds ``stiffness'', for example) and motivates the addition of friction in the equation for the auxiliary variable.  The addition of linear dissipation in $p$ ($\gamma >0$) is not always essential, but it helps to regularize the method in case of a degeneracy in the control law, as we shall see later; it also makes possible the Lyapunov stability analysis we use here to show exponential convergence of both the dynamics and discretization.

As a numerical illustration, we show the result of applying the kinetic energy-based method (termed KFAD) in the case of a system with harmonic potential $f(x) = x^TCx/2$ where $C = {\rm diag}(1,10)$. (We defer presenting  the details of the numerical methods in order to focus on the dynamical aspects; numerics will be addressed in Section~\ref{sec:numerical_schemes}. For these graphs, we initialized the system at  $(x_1(0),x_2(0))=(1,2)$ and have used a timestep $\delta t=0.01$ which was small enough to ensure that the figures are illustrative of the properties of the dynamics, not the numerical method, per se.)   A friction value of $\gamma =1$ was used in both methods.

\begin{figure}[htbp!]
  \centering
  \includegraphics[scale=0.4]{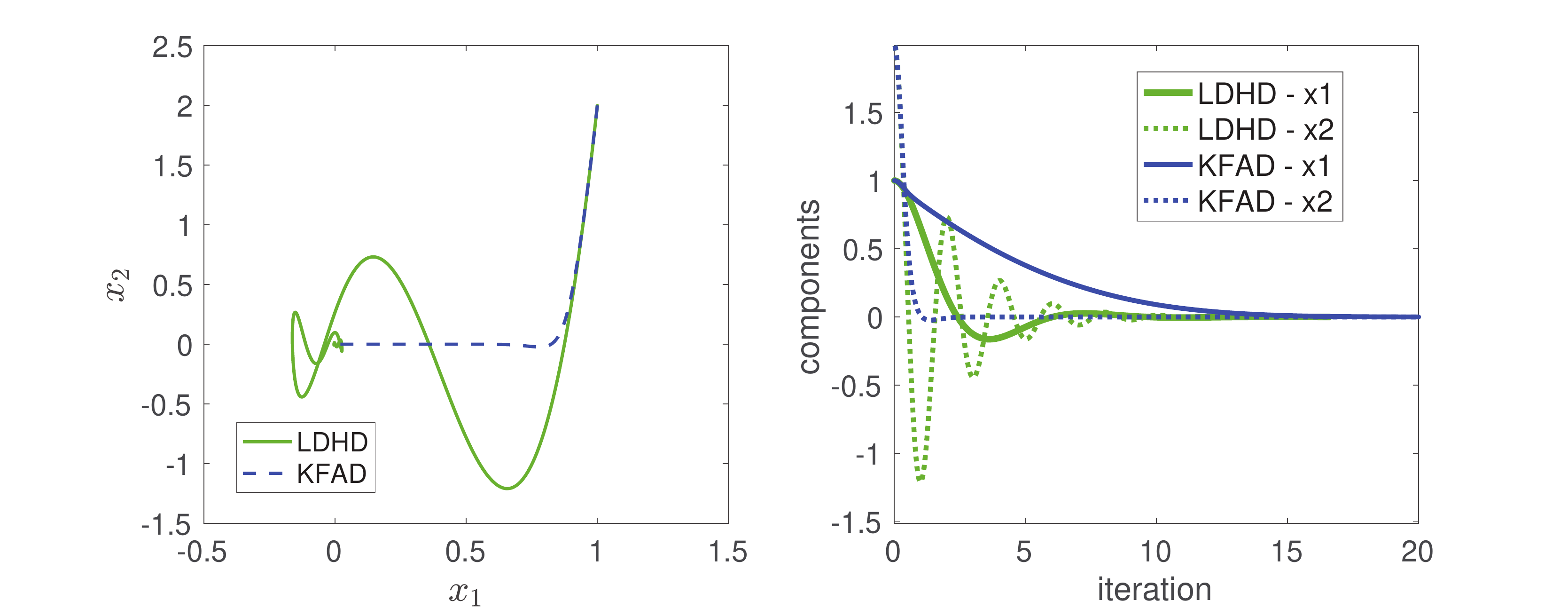} 
  \caption{The convergence of the trajectories of the linearly dissipated Hamiltonian dynamics (green) compared with that of KFAD (blue).  On the right, the graphs of the  individual components are shown as functions of time.}
  \label{fig:harmonic2d}
\end{figure}
As we can see from Fig.~\ref{fig:harmonic2d}, the extended system has  largely removed the oscillatory ``pre-equilibrium'' dynamics, in particular the strongest part of the oscillation which is due to the second component.   In these tests the stopping criterion was a simple tolerance test based on the distance to the (known) minimum $\|x-x_{\rm min}\|\leq 10^{-4}$.  The KFAD scheme used here took around 25\% more timesteps to reach the minimum (2085 vs. 1663), but as we explain later, the situation is often reversed in nonlinear systems, and more powerful methods designed using thermostats with position-dependent coupling can help to damp strong oscillations in stiff optimization problems while also providing excellent efficiency gains.

The rest of this article is organised as follows.  In Section 2 we present a family of extended systems, motivate various choices in its definition, and discuss its basic properties.  Section 3 studies convergence of the dynamical system through construction of Lyapunov functions.   Section 4 provides numerical discretizations, and the convergence of discretization schemes is taken up in Section 5.  Finally, we present numerical experiments with several variants of the FAD scheme in Section 6.
\section{Friction-adaptive descent (FAD)}
We restrict to methods that have dynamical equations that are easy to solve by common discretization methods such as splitting.  A useful class is defined by
\begin{equation} \label{A_coupling}
\Phi(x,p) = A(x)p,
\end{equation}
for some symmetric matrix function $A =A(x)$.    We also include linear damping in the equation for $\xi$  and we introduce a coefficient $\mu$ to control the coupling between momenta and auxiliary variable. Denoting by~$F(x) = -\nabla f(x)$, the equations become
\begin{align}
\dot{x} & = p,\label{eq:FADa} \\
\dot{p} & = F(x) - \xi A(x)p - \gamma p, \label{eq:FADb}\\
\dot{\xi} & = \mu^{-1}p^TA(x)p - \alpha \xi. \label{eq:FADc}
\end{align}

The inclusion of friction in the equation for $\xi$ is reminiscent of Nos\'{e}--Hoover--Langevin~\cite{LeNoTh2008}, but we do not currently propose to incorporate additive noise (or consider gradient noise here), thus (\ref{eq:FADa})-(\ref{eq:FADc}) is a deterministic system of ODEs. Without the dissipation term $-\alpha \xi$, the control law for $\xi$ would be $\dot{\xi} = \mu^{-1} p^T A(x) p$. Assuming a symmetric positive semidefinite matrix $A(x)$, we have $\dot{\xi}(t) \geq 0$ for all~$t \geq 0$, leading to a non-decreasing adaptive friction $\xi$. Adding the dissipation term allows the friction to both dynamically increase and decrease. A dissipation term also adds a ``short-term memory'' to the coupling between $\xi$ and $p$, similarly to how the dissipation term $-\gamma p$ in \eqref{eq:lindiss-2} introduces momentum (``memory'' of recent states) in Hamiltonian dynamics, leading to the dynamics of momentum gradient descent. This notion of taking history into account when designing control laws was introduced in the work of Maxwell \cite{Maxwell} on governors, where he observed that in order to achieve speed regulation with zero steady state error, the integral (history) of the speed error must be minimized. Finally, adding the dissipation term $-\alpha \xi$, ensures rigorous convergence to the equilibrium. Without this term there would be an infinite number of equilibrium points, each with a different value of $\xi$.

We are interested in methods defined by various choices of $A$.  We describe some examples below.  There are certainly many more options than these but we leave their study for future work.

\subsection{Kinetic friction-adaptive descent (KFAD)}
In case $A=I$ we have the Nos\'{e}--Hoover dynamics at zero temperature, supplemented by linear damping in both~$p$ and~$\xi$.  The distinguishing feature here is that the FAD coupling matrix $A=I$ is itself non-degenerate.

If $\alpha>0$, we can see that asymptotically, near equilibrium, $\xi \sim \alpha^{-1}\mu^{-1}p^T A p = \alpha^{-1}\mu^{-1}\|p\|^2$. Inserting this into (\ref{eq:FADb}), we have
\[
\dot{p} = F(x) - \xi p - \gamma p \sim F(x) - \frac{1}{\alpha\mu} \|p\|^2 p - \gamma p.
\]
This can be viewed as a combination of cubic damping and linear damping and is described by a generalized (quartic)
Rayleigh dissipation function \cite{Go1980,Virga2015}:
\[
R(p) = \frac{1}{4 \alpha \mu} \|p\|^4  + \frac{\gamma}{2} \|p\|^2.
\]
In the case of KFAD, the nonlinear damping is isotropic, depending only on $\|p\|$.
Cubic damping is commonly used as a vibration suppression mechanism in  mechanical systems~\cite{panananda2012effect,zhu2022} and was studied extensively in the scalar case by A. Babister \cite{Ba1974}.  We stress though that we use the freedom of the auxiliary variable to create a more
flexible dynamics. The friction parameter~$\alpha$ cannot be assumed to be large, so the relaxation to the equilibrium of the $\xi$ equation is likely to be relevant to the overall behavior; as we shall see in the numerical experiments we generally found modest values of $\alpha$ ($\alpha \approx 1$) to be most useful in practice.

\subsection{Force-based friction-adaptive descent (FFAD)}
A further generalization is to allow $A$ in (\ref{A_coupling}) to be indefinite.  For example one could use $A = F(x)F(x)^T$ where $F = -\nabla f$ is the force vector itself.  

Assuming $\alpha$ to be sufficiently large,  the auxiliary variable will relax rapidly to its equilibrium obtained by setting the right hand side
of (\ref{eq:FADc}) to zero, that is
\[
\xi \approx  \frac{1}{\alpha \mu} (p\cdot F(x))^2.
\]
The system (\ref{eq:FADa})-(\ref{eq:FADb}) then becomes
\begin{align}
    \dot{x} & =  p, \label{eq:FADa1}\\
\dot{p} & = F(x) -  \frac{1}{\alpha \mu} (p\cdot F(x))^2 F(x) F(x)^T p - \gamma p. \label{eq:FADb1}
\end{align}
We can derive the damping forces from the framework of generalized Rayleigh dissipation~\cite{Virga2015}. Specifically, define
\begin{align*}
R(x,p) & =  \frac{1}{4\alpha \mu}(p\cdot F(x))^4 + \frac{\gamma}{2} \|p\|^2\\
& = \frac{1}{4\alpha \mu} \left(p^T F(x) F(x)^T p\right)^2 + \frac{\gamma}{2} \|p\|^2.
\end{align*}
Then
\[
\nabla_p R(x,p) = \frac{1}{\alpha \mu} \left(p^T F(x) F(x)^T p\right) F(x) F(x)^T p + \gamma p,
\]
so the damping forces in (\ref{eq:FADb1}) are of the required form, i.e. $-\nabla_p R(x,p)$.
It follows that, with $H(x,p) = \|p\|^2/2 + f(x)$ the Hamiltonian of the system,
\begin{align*}
\frac{\rm d}{{\rm d}t} H & = -p \cdot \nabla_p R(x,p)\\
& = -\frac{1}{\alpha \mu} (p^T F(x) F(x)^T p)^2 - \gamma \|p\|^2.
\end{align*}

Note that (\ref{eq:FADb1}) can be rewritten as 
\begin{equation*}
    \dot{p}  = F_{\rm eff}(x)  - \gamma p,
\end{equation*}
where the ``effective'' force is given by 
\begin{equation}
    \label{eq:F_eff}
F_{\rm eff}(x) = \left(1 -  \frac{1}{\alpha \mu} (p\cdot F(x))^3\right) F(x).
\end{equation}
When $p \cdot F(x) >0 $, i.e. when we are moving in the direction of $F(x)$, for example when we are descending towards a minimum, then $F_{\rm eff}(x) = a F(x)$ with $a <1$. When $p \cdot F(x) < 0$, then $F_{\rm eff}(x) = a F(x)$ with $a > 1$, so in the case where we are moving in a direction opposite to the force, the effective force experienced by the particle increases, pulling the particle back in the direction of the force. This works as an oscillation suppression mechanism. We see that the same mechanism that suppresses vibrations also slows down descent towards the minima. Examining the dynamics \eqref{eq:FADa1}-\eqref{eq:FADb1} can therefore help us explain the behaviour of FFAD.  

The cubic damping in (\ref{eq:FADb1}) acts to rapidly diminish
$p \cdot F$, that is, the component of the momentum in the steepest descent direction.  
To demonstrate this in practice, we investigate the FFAD method for the Rosenbrock function given by
\begin{equation*} \label{eq:Rosenbrock_Potential}
    f_{R}(x_1,x_2) = (a-x_1)^2 + b(x_2-x_1^2)^2, \qquad a = 1, \ b = 100.
\end{equation*}
This function has a unique global minimum $f(x^*) = 0$ at $x^* = (1,1)$. 
The Hessian matrix of this potential has both steep and moderate components, thus we could call it a stiff optimization problem.  It is known that it presents challenges for most standard optimization schemes.

We show below the results for a typical experiment comparing KFAD and FFAD with LDHD. For KFAD/FFAD we used coefficients $\mu=1$ and $\alpha=0.1$.  For linear dissipation, present in all three methods, we used $\gamma =1$.  We started the integrator at $(1,2)$, just above the minimum. The stepsize was $\delta t =0.01$. For all methods we used splitting schemes in which various parts of the ODEs are integrated exactly; all details of these methods are presented in  Sec. 4.

\begin{figure}[htbp!]
\begin{center}
  \includegraphics[width=5.5in]{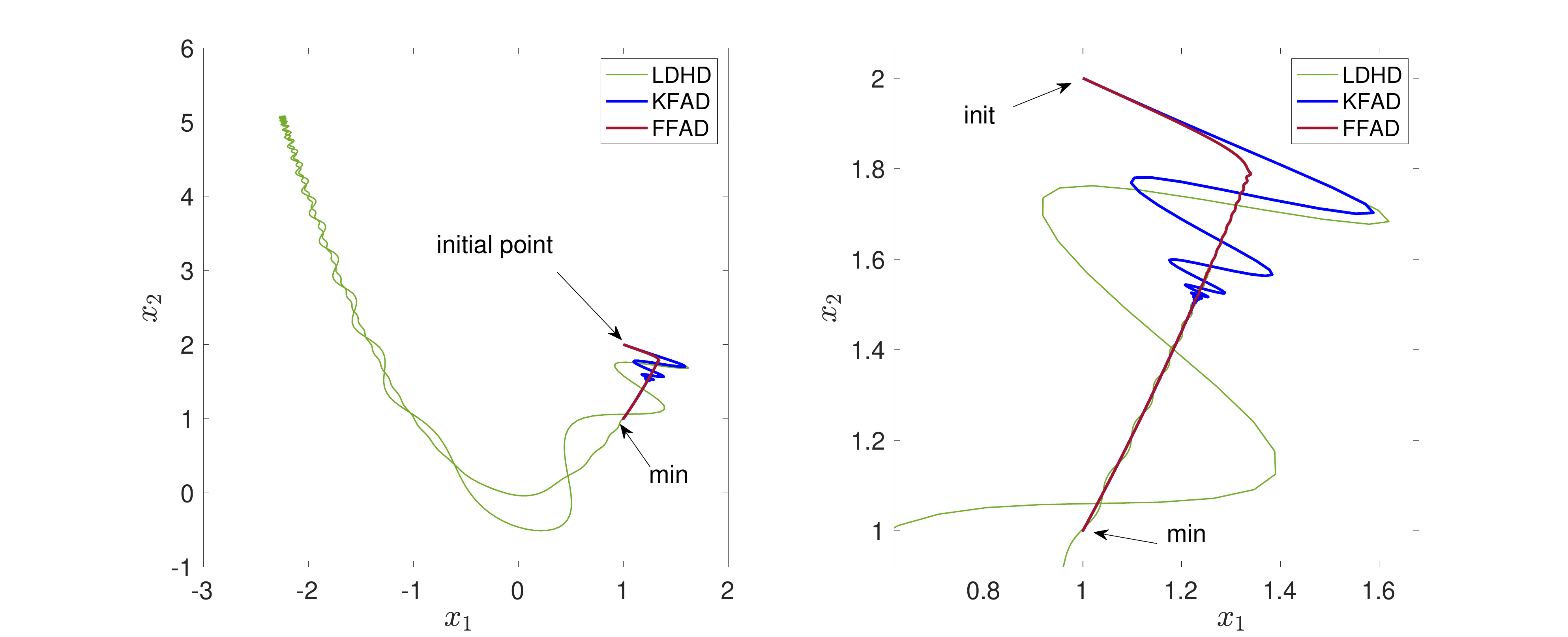}
  \caption{Comparison of performance of LDHD, FFAD and KFAD on the 2d Rosenbrock model, for initial condition $(1,2)$.  The second graph shows a close-up of motion in the vicinity of the minimum.}
  \label{FFADvKFAD}
  \end{center}
  \end{figure}

In this example, LDHD makes a rather unhelpful excursion into an extreme end of the valley (as far as the vicinity of the point $(-2,5)$), before gradually wobbling back toward the minimum.  KFAD moves rapidly into the narrow channel, and then, steadily approaches the minimum.  
The number of timesteps needed to reach the minimum (using the same criterion as before, $\|x-x_{\rm min}\| \leq 10^{-4}$) were as follows for the three methods: LDHD,1803; KFAD,1119; FFAD: 1447. 

We re-ran the example with several different initial conditions.  The results were qualitatively similar. Although the number of steps to reach the minimum varied considerably, FFAD removed nearly all oscillation outside the channel.  KFAD on the other hand was typically as or more efficient than FFAD, and reduced but did not eliminate oscillation outside the channel.    The result from a different initial condition, $(4,2)$, is shown below.

\begin{figure}[htbp!]
\begin{center}
  \includegraphics[width=5.5in]{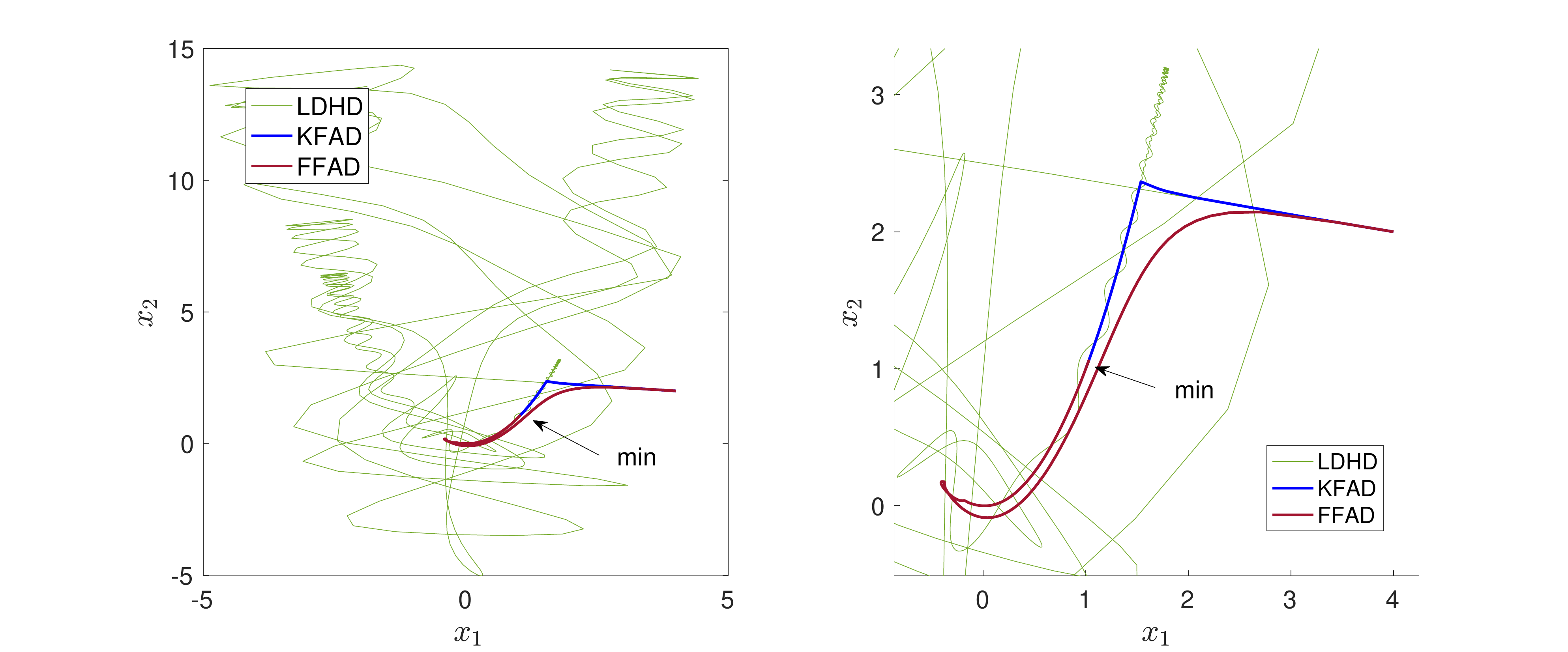}
  \caption{A second illustration of the relative performance of LDHD, KFAD and FFAD on the 2d Rosenbrock model, with initial condition $(4,2)$.  A close-up of the vicinity of the minimum is shown in the right panel. }
  \label{FFADvKFADv2}
  \end{center}
  \end{figure}
In Fig.~\ref{FFADvKFADv2}, the timesteps needed to reach the minimum were as follows: LDHD: 2010, KFAD: 1604, and FFAD: 3658.  
FFAD showed a tendency to  overshoot the channel and then oscillate during the approach, or to slightly miss the channel and then follow it. The performance of KFAD here is remarkable -- it completely removes the oscillatory initial phase. We will provide more detailed numerical studies on the Rosenbrock model in Sec.~\ref{sec:numerical_schemes}, but the results observed here are characteristic of the differences in the methods. KFAD outperforms LDHD for almost all initial conditions.  

\subsection{Projective mixture coupling \label{sec:McFAD}} 

We attribute the somewhat erratic performance of FFAD (in terms of efficiency) to the degenerate nature of the friction coupling.   We also recognize the potential for $\|F\|$ to be very large, which can lead to excessive variation in the friction coupling force.   We therefore considered alternatives based on  
\[
A(x) = \lambda_1 I + \lambda_2 \Pi^F(x),
\]
where $\Pi^F(x) = \|F(x)\|^{-2} F(x) F(x)^T$ is a projector onto the range of $F(x)$.
The use of such a projector can be seen as a zero temperature version of ``projective thermostatting'' \cite{JiaLe2005}, albeit with a novel choice of the projection which has not been previously considered in the thermostat setting.  We explore different choices of the parameters $\lambda_1$, $\lambda_2$ in the numerical experiments.

We illustrate the potential for mixture coupling (McFAD) on the Rosenbrock example, see Fig.~\ref{McFAD}.  The first figure shows comparisons of the minimization curves; the second graphs the distance from the minimum.

\begin{figure}[htbp!]
\begin{center}
  \includegraphics[scale=0.25]{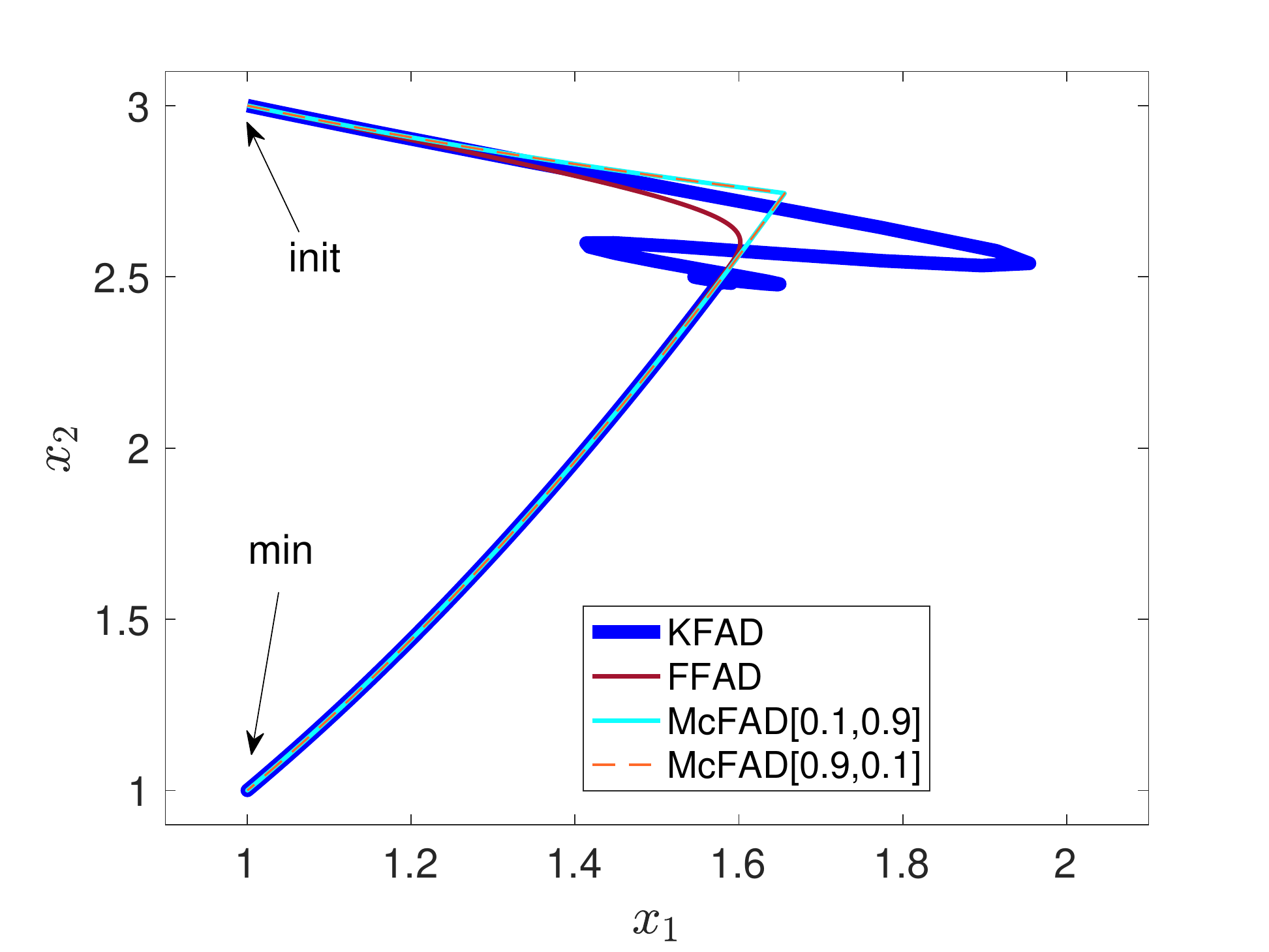} \includegraphics[scale=0.25]{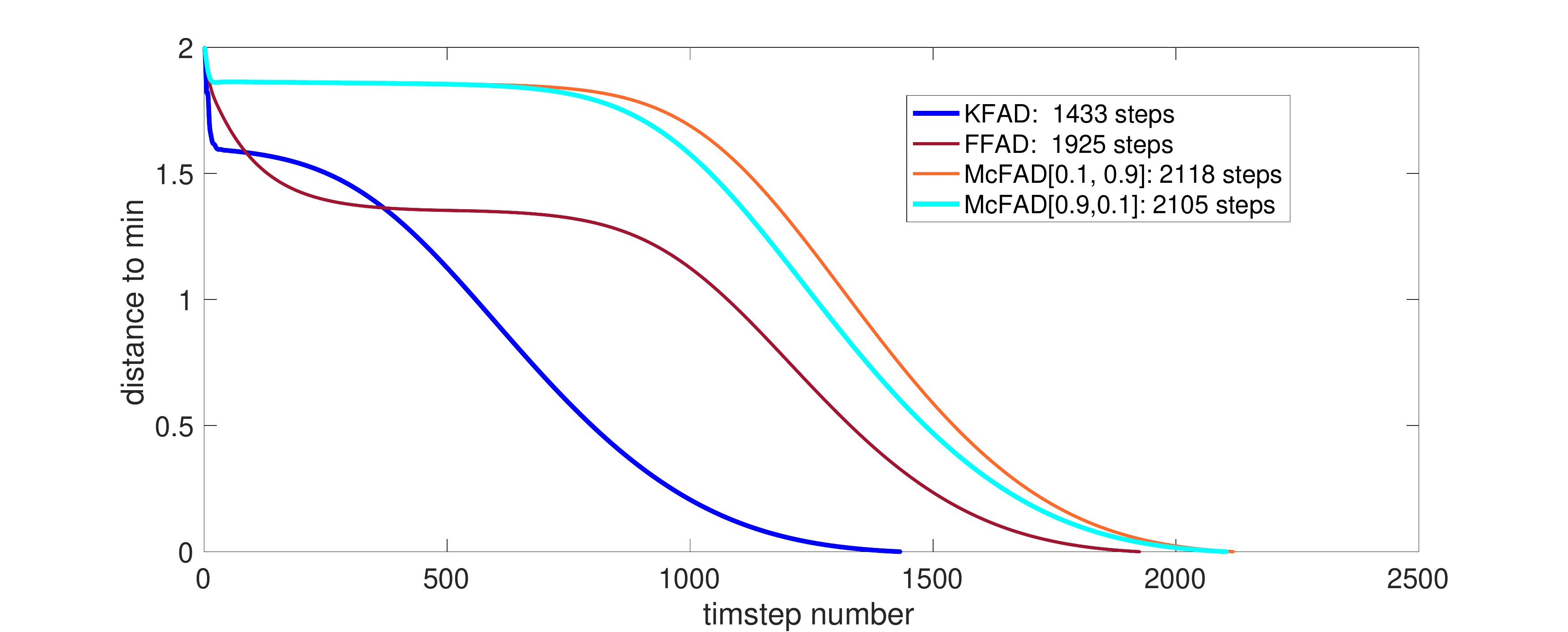}
  \caption{Left: convergence for mixture coupling with parameters~$[\lambda_1,\lambda_2]=[0.1, 0.9]$ and with parameters $[0.9,0.1]$ for the Rosenbrock function.  Comparing to KFAD we see that the convergence is more direct, with essentially no oscillation. Right: we illustrate the convergence in terms of decay of the distance with time.   }
  \label{McFAD} 
  \end{center}
  \end{figure}

  There are wide plateaus in the convergence of FFAD and mixture coupling which slow convergence. KFAD makes more rapid progress to the minimum, despite oscillations in the trajectories themselves.

\section{Dynamical systems analysis of friction-adaptive descent}

In this section, we briefly discuss the properties of the dynamics and introduce a Lyapunov function to show
exponential convergence under certain assumptions on $f$ and $A$, in particular that $f$ is strongly convex and that $A$ is symmetric positive semidefinite.  

Before turning to the convergence theorem, we present some computed ``phase portraits'' of KFAD and FFAD (See Fig.~\ref{phase_portraits}).  By phase portraits we mean projections of the solution curves of each system into some suitable plane (in this case, we consider the $x_1x_2$ projection of FAD in the case of the Rosenbrock function, where we assumed initial $\xi_0=0$).  The parameter $\gamma$ was fixed at 1 and $\alpha$ and $\mu$ were also taken to be 1.  The stepsize for integration was fixed at 0.01.  Points are generated on a grid with unit spacing in $[-4,4] \times [-4,4]$.  In all cases every solution shown converges to $(1,1)$.
\begin{figure}[tbph!]\label{fig:phase_portraits_FFAD_KFAD}
\begin{center}
  \includegraphics[width=2.75in]{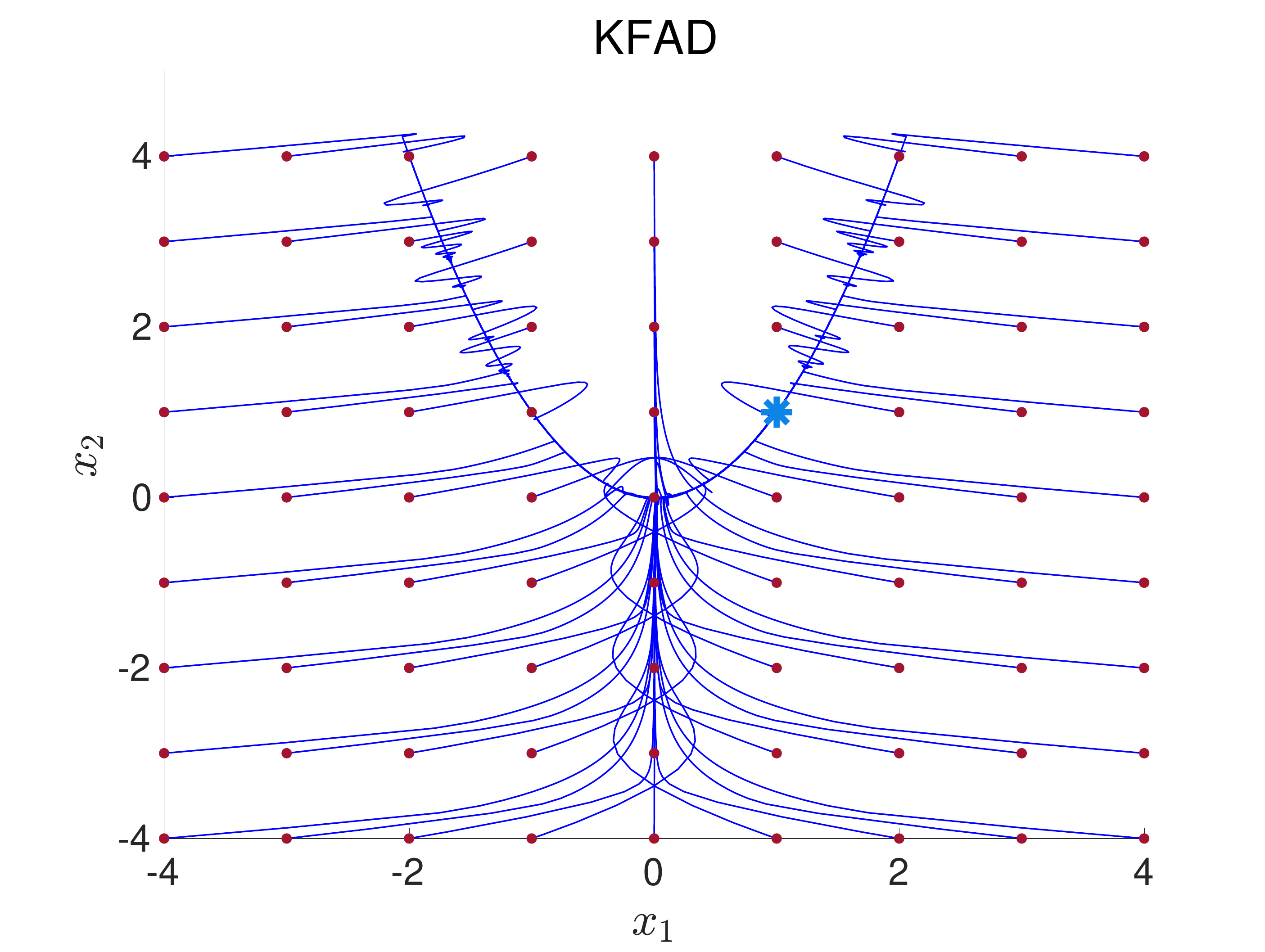}
  \includegraphics[width=2.75in] {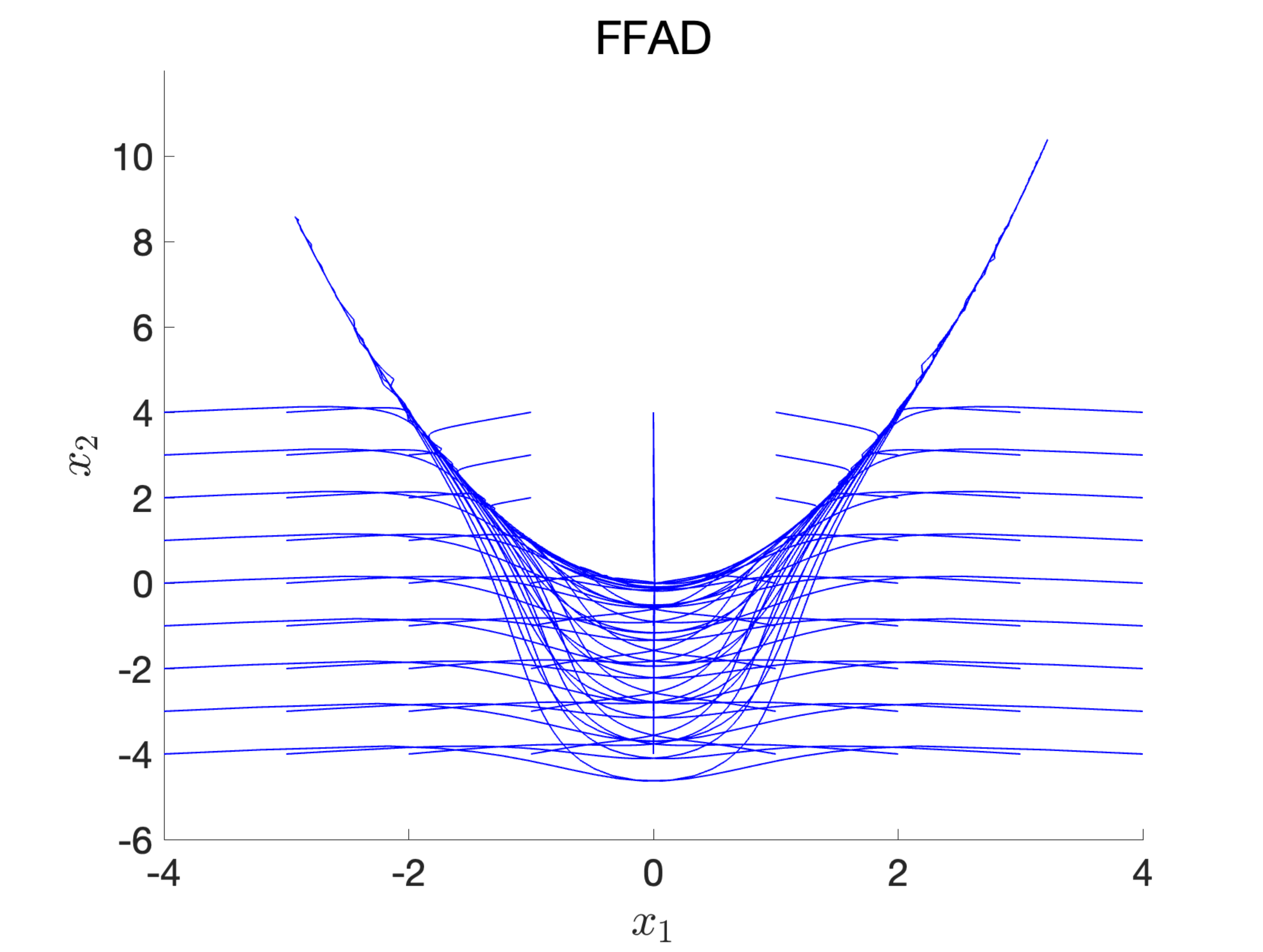}\\
  \caption{Phase portraits, as described in the text, show the convergence of each method from a number of different initial points (red points) in the $x_1x_2$ plane to the minimizer at $(1,1)$, marked as a blue star. Left: KFAD. Right: FFAD
  Note the different scale in the FFAD figure due to the long excursions along the narrow channel.}
  \label{phase_portraits}
  \end{center}
  \end{figure}

Convergence to the minimum is achieved for all the initial conditions, although the way that convergence is achieved depends strongly on the initial point.

\subsection{Elementary properties of friction-adaptive dynamics}

With $\xi$ defined as in (\ref{eq:FADa})-(\ref{eq:FADc}), we have, 
from  $\dot{\xi} = p^T A(x)p/\mu - \alpha \xi$, that
\begin{equation}
\label{eq:xi_integrated}
\xi(t) = \rme^{-\alpha t} \xi(0) + \frac1\mu \int_0^t \rme^{-\alpha (t-s)} p(s)^T A(x(s)) p(s) \, {\rm d} s.
\end{equation}
In particular this shows that, when~$\xi(0)\geq 0$ and $A$ is everywhere symmetric positive semidefinite, then $\xi(t) \geq 0$ for all~$t \geq 0$, thus $\xi$ retains the interpretation of a friction coefficient. 

A first result is that the dynamics is well posed on infinite time horizons under some conditions on~$f$ and~$A$.

\begin{lemma}
\label{lem:bounds}
Assume that~$A(x)$ is symmetric positive semidefinite/ for all~$x \in \R^d$, the function~$f$ is smooth and~$f(x) \to +\infty$ as $\|x\| \to +\infty$.
Then, for any initial condition~$(x_0,p_0,\xi_0) \in \R^d \times \R^d \times \R_+$, the solution of~\eqref{eq:FADa}-\eqref{eq:FADc} is well defined for all times~$t \geq 0$, and there exists~$R \in \R_+$ such that 
\[
\forall t \geq 0, \qquad \|x(t)\| \leq R, \quad \|p(t)\| \leq R, \quad 0 \leq \xi(t) \leq R.
\]
\end{lemma}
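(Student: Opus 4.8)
The plan is to extract uniform-in-time a priori bounds from an energy-type functional and then close the argument with the standard continuation criterion for ODEs. Since $f$ is smooth and $A$ is locally Lipschitz (which holds in every concrete choice of $A$ above), the Picard--Lindel\"of theorem produces a unique maximal solution of~\eqref{eq:FADa}--\eqref{eq:FADc} on an interval $[0,T_{\max})$ issued from $(x_0,p_0,\xi_0)$; it then suffices to show that this solution stays in a fixed compact subset of $\R^d\times\R^d\times\R_+$, which forces $T_{\max}=+\infty$ and simultaneously yields the stated bound.

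The functional I would use is the extended Hamiltonian~$\widetilde{H}$ with the auxiliary kinetic term reweighted by the coupling constant~$\mu$,
\[
\mathcal{E}(x,p,\xi) = \tfrac12\|p\|^2 + f(x) + \tfrac{\mu}{2}\xi^2 .
\]
Differentiating along the flow and inserting~\eqref{eq:FADa}--\eqref{eq:FADc},
\[
\frac{d}{dt}\mathcal{E} = p^T\dot p + \nabla f(x)^T p + \mu\xi\dot\xi = -\xi\,p^TA(x)p - \gamma\|p\|^2 + \xi\,p^TA(x)p - \alpha\mu\xi^2 = -\gamma\|p\|^2 - \alpha\mu\xi^2 \le 0 .
\]
The point of the weight $\mu/2$ is precisely that the two friction-coupling contributions $\mp\,\xi\,p^TA(x)p$ cancel exactly, leaving only the sign-definite dissipation (this cancellation uses neither symmetry nor semidefiniteness of~$A$). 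Hence $\mathcal{E}(x(t),p(t),\xi(t)) \le \mathcal{E}(x_0,p_0,\xi_0) =: E_0$ on $[0,T_{\max})$.

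The bounds are then immediate. Since $f$ is continuous and coercive it attains a finite infimum $f_\star := \inf_{\R^d} f$. From $\mathcal{E}(t)\le E_0$ together with $f(x(t))\ge f_\star$ and $\xi(t)^2\ge 0$ we get $\|p(t)\|^2 \le 2(E_0-f_\star)$; dropping instead the momentum term gives $\mu\,\xi(t)^2 \le 2(E_0-f_\star)$; and dropping both gives $f(x(t)) \le E_0$, so $x(t)$ stays in the sublevel set $\{x\in\R^d : f(x)\le E_0\}$, which is bounded by coercivity and closed by continuity, hence compact. Positive semidefiniteness of $A$ combined with $\xi_0\ge 0$ and the representation~\eqref{eq:xi_integrated} gives $\xi(t)\ge 0$ throughout. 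Taking $R$ to be the maximum of $\sqrt{2(E_0-f_\star)}$, $\sqrt{2(E_0-f_\star)/\mu}$ and $\sup\{\|x\| : f(x)\le E_0\}$ yields all three estimates, with $R$ depending only on $f$, $\mu$ and the initial data through $E_0$. Since the trajectory then stays in the compact set $\{\|x\|\le R\}\times\{\|p\|\le R\}\times[0,R]$ on its maximal interval, the escape-from-compact-sets criterion forces $T_{\max}=+\infty$.

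I do not expect a genuine obstacle here; if anything, the two points requiring care are (i) recognizing that the correct conserved-modulo-dissipation functional weights $\xi^2$ by~$\mu$, so that the nonlinear feedback term is dissipation-free, and (ii) the fact that a priori control of the \emph{new} variable~$\xi$ --- which could in principle grow and destabilize the $p$-equation --- comes for free because $\xi$ enters $\mathcal{E}$ quadratically with the right sign. The only hypothesis not stated explicitly is mild regularity of $A$ (local Lipschitz continuity), needed for local well-posedness and satisfied in all the concrete choices considered.
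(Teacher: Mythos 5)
Your proposal is correct and follows essentially the same route as the paper: local existence by Cauchy--Lipschitz, the Lyapunov function $f(x)+\tfrac12\|p\|^2+\tfrac{\mu}{2}\xi^2$ whose orbital derivative is $-\gamma\|p\|^2-\alpha\mu\xi^2\le 0$, nonnegativity of $\xi$ from the integral representation~\eqref{eq:xi_integrated}, and coercivity of $f$ to confine the trajectory to a compact set. Your write-up is in fact somewhat more explicit than the paper's (on the continuation criterion and on where the sign of $\xi$ is and is not needed), but the argument is the same.
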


\begin{proof}
The Cauchy--Lipschitz theorem ensures the existence and uniqueness of a solution for a positive time. In order to show that the solution is global in time, we introduce the Lyapunov function 
\[
\cG(x,p,\xi) = f(x)-f(x^*) + \frac12 \|p\|^2 + \frac{\mu}{2}\xi^2.
\]
A simple computation shows that the function $\sG(t) = \cG(x(t),p(t),\xi(t))$ satisfies
\[
\begin{aligned}
    \dot{\sG}(t) & = \nabla f(x(t)) \cdot \dot{x}(t) + p(t) \dot{p}(t)  + \mu \xi(t) \dot{\xi}(t) \\ 
    & = -\gamma \|p(t)\|^2 - \alpha \mu \xi(t)^2 \leq 0,
\end{aligned}
\]
where we used that~$\xi(t) \geq 0$. This shows that~$\sG(t) \leq \sG(0)$, from which the result easily follows since~$f-f(x^*)$ is nonnegative. 
\end{proof}

The next result, whose proof is immediate, characterizes equilibria of the dynamics.

\begin{proposition}
\label{prop:equilibria}
The equilibria of the system (\ref{eq:FADa})-(\ref{eq:FADc}) correspond to 
$$(\dot{x}, \dot{p}, \dot{\xi}) 
 = 0 \Rightarrow (p^*, F(x^*) , \xi^*) = 0,$$
i.e. they coincide with the physical equilibria.
 \end{proposition}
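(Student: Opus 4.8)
The plan is to simply set the three right-hand sides of~\eqref{eq:FADa}-\eqref{eq:FADc} to zero and read off the conditions one equation at a time, exploiting the triangular structure of the system. First I would use~\eqref{eq:FADa}: $\dot{x}=0$ forces $p^*=0$ directly, with no hypotheses needed. This is the key observation that unlocks everything else, since the coupling terms in the other two equations are (at least) linear in~$p$.

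Next I would substitute $p^*=0$ into~\eqref{eq:FADb}: the terms $-\xi^* A(x^*)p^*$ and $-\gamma p^*$ both vanish, leaving $F(x^*)=0$, i.e.\ $x^*$ is a critical point of~$f$. Finally I would substitute $p^*=0$ into~\eqref{eq:FADc}: the quadratic form $\mu^{-1}(p^*)^TA(x^*)p^*$ vanishes, leaving $-\alpha\xi^*=0$, hence $\xi^*=0$ provided $\alpha>0$ (which is assumed throughout the FAD framework). Conversely, any point of the form $(x^*,0,0)$ with $F(x^*)=0$ clearly makes all three right-hand sides vanish, so the equilibria coincide exactly with the ``physical'' critical points lifted by $p=\xi=0$.

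There is essentially no obstacle here: the argument is a one-line consequence of the ordering of the equations and the fact that every feedback term carries a factor of~$p$. The only point worth flagging is the role of $\alpha>0$ in pinning down $\xi^*=0$; if $\alpha=0$ one would instead obtain a one-parameter family of equilibria $(x^*,0,\xi^*)$, which is precisely the degeneracy the dissipation term $-\alpha\xi$ was introduced to remove, as noted earlier in Section~2.
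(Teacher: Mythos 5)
Your argument is correct and is precisely the "immediate" verification the paper has in mind (the paper states the proof without writing it out): read off $p^*=0$ from $\dot{x}=0$, then $F(x^*)=0$ and $\xi^*=0$ from the remaining equations, with the converse being obvious. Your remark about needing $\alpha>0$ to pin down $\xi^*=0$ is a valid and worthwhile observation, consistent with the paper's own discussion of why the $-\alpha\xi$ term is included.
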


The last result relates LDHD and FAD. 
 
 \begin{proposition}
 \label{prop:LDHD_limit}
 Fix a time~$\tau > 0$, and assume that the conditions of Lemma~\ref{lem:bounds}
are satisfied, and that~$A$ is bounded over compact sets. Then, in the limit $\alpha \to \infty$ and $\alpha\mu \to +\infty$, the solution to the system (\ref{eq:FADa})-(\ref{eq:FADc}) converges uniformly over the time interval~$[0,\tau]$ to the solution of  linearly dissipated Hamiltonian dynamics.
 \end{proposition}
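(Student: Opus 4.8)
The plan is to treat the FAD system \eqref{eq:FADa}--\eqref{eq:FADc} as a regular perturbation of linearly dissipated Hamiltonian dynamics in which the auxiliary variable~$\xi$ is slaved to its quasi-equilibrium value. The first step is to use Lemma~\ref{lem:bounds}: since $A$ is symmetric positive semidefinite and $f$ is smooth and coercive, every trajectory of the perturbed system (for each fixed $\alpha,\mu$) stays inside a compact set $\|x\|,\|p\|,\xi \leq R$. The bound $R$ comes from $\sG(0) = f(x_0)-f(x^*) + \tfrac12\|p_0\|^2 + \tfrac{\mu}{2}\xi_0^2$; taking $\xi_0 = 0$ (or more generally bounding $\mu \xi_0^2$ uniformly as $\mu$ varies) this gives an $R$ independent of $\alpha$ and $\mu$, so all the trajectories in the limiting family live in one common compact set $K$. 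On $K$, $A(x)$ is bounded by hypothesis, say $\|A(x)\| \leq a_K$, and hence $p^T A(x) p \leq a_K R^2 =: M$.

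The second step is to control $\xi$ directly from the integral representation \eqref{eq:xi_integrated}. We have $0 \leq \xi(t) \leq \rme^{-\alpha t}\xi_0 + \tfrac{M}{\mu}\int_0^t \rme^{-\alpha(t-s)}\,ds \leq \xi_0 \rme^{-\alpha t} + \tfrac{M}{\alpha\mu}$. Thus, under the stated scaling $\alpha \to \infty$ and $\alpha\mu \to \infty$, for any $\delta > 0$ we have $\xi(t) \leq \delta$ for all $t \geq t_0(\delta,\alpha)$, where $t_0 = \alpha^{-1}\log(\xi_0/\delta) \to 0$. On the initial sliver $[0,t_0]$ we still have $\xi(t) \leq \xi_0$, which is bounded. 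The upshot: the term $\xi A(x) p$ appearing in \eqref{eq:FADb} is bounded by $\xi \cdot a_K R$, which is $O(\xi_0)$ on a vanishingly short initial interval and $O(M/(\alpha\mu)) \to 0$ thereafter. So $\int_0^\tau \|\xi(s) A(x(s)) p(s)\|\,ds \to 0$ as $\alpha,\alpha\mu \to \infty$.

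The third step is a Gronwall comparison. Let $(x,p)$ solve the FAD system restricted to its first two equations (with the actual $\xi(t)$), and let $(\bar x,\bar p)$ solve LDHD, i.e. \eqref{eq:lindiss-1}--\eqref{eq:lindiss-2}, with the same initial data $(x_0,p_0)$. Subtracting the integral forms of the equations and using that $\nabla f$ is Lipschitz on $K$ (smoothness plus compactness), one gets
\[
\|x(t)-\bar x(t)\| + \|p(t)-\bar p(t)\| \leq C\int_0^t \bigl(\|x(s)-\bar x(s)\| + \|p(s)-\bar p(s)\|\bigr)\,ds + \int_0^t \|\xi(s) A(x(s)) p(s)\|\,ds,
\]
where $C$ depends only on $\gamma$, the Lipschitz constant of $\nabla f$ on $K$, and $\tau$. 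Gronwall's inequality then gives $\sup_{[0,\tau]}(\|x-\bar x\|+\|p-\bar p\|) \leq \rme^{C\tau}\int_0^\tau \|\xi A(x)p\|\,ds \to 0$, which is the claimed uniform convergence.

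The main obstacle is making the initial transient genuinely negligible: because $\xi_0$ need not be zero, $\xi$ is only $O(1)$ rather than $o(1)$ on $[0,t_0]$, so one must exploit that $t_0 = O(\alpha^{-1}\log\alpha) \to 0$ to conclude $\int_0^{t_0}\|\xi A(x)p\|\,ds \leq \xi_0 a_K R\, t_0 \to 0$ — splitting the time integral at $t_0$ is the one delicate bookkeeping point. A secondary subtlety is the uniformity of the compact set $K$ and hence of the Lipschitz constant $C$ across the whole family of systems; this is why one wants the initial $\xi_0$ chosen so that $\mu\xi_0^2$ (equivalently $\alpha\mu\cdot \xi_0^2/\alpha$) stays bounded, e.g. simply $\xi_0 = 0$ as in the phase-portrait computations, or at worst $\xi_0$ fixed while noting $\mu \to 0$ under the stated scaling so $\mu\xi_0^2 \to 0$ anyway. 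Everything else is routine Gronwall estimation.
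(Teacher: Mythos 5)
Your proof follows essentially the same route as the paper's: bound $\xi$ via the integral representation~\eqref{eq:xi_integrated} together with Lemma~\ref{lem:bounds}, show that the coupling term $\int_0^\tau \xi(s) A(x(s))p(s)\,{\rm d}s$ vanishes in the stated limit, and conclude by a Gronwall comparison with LDHD; you are in fact more explicit than the paper about the Gronwall step, the initial transient near $t=0$, and the need for the compact set (hence the Lipschitz constant) to be uniform in $\alpha,\mu$. The only slip is in a side remark: the scaling $\alpha\to\infty$, $\alpha\mu\to\infty$ does not force $\mu\to 0$ (e.g.\ $\mu$ fixed is admissible), but your primary fix --- taking $\xi_0=0$ or requiring $\mu\xi_0^2$ bounded --- already supplies the uniformity you need.
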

 
\begin{proof}
The equality~\eqref{eq:xi_integrated} combined with the bound from Lemma~\ref{lem:bounds} implies that there exists~$K \in \mathbb{R}_+$ (depending on the initial condition) such that
\[
0 \leq |\xi(t)| \leq \rme^{-\alpha t} |\xi(0)| + \frac{K}{\mu} \int_0^t \rme^{-\alpha(t-s)} \, {\rm d} s \leq \rme^{-\alpha t} |\xi(0)| + \frac{K}{\alpha\mu}.
\]
The two terms converge to~0 uniformly on bounded time intervals of the form~$[t_{\rm min},\tau]$ for any~$t_{\rm min} \in (0,\tau)$, respectively as $\alpha\to+\infty$ for the first one, and as~$\alpha\mu \to +\infty$ for the second one. This bound is next combined with the integral formulation of the solution
\[
p(t) = p(0) - \gamma \int_0^t p(s) \, ds - \int_0^t \xi(s) A(x(s)) p(s) \, ds.
\]
The last term converges to~0, which leads to the claimed convergence result since the system (\ref{eq:FADa})-(\ref{eq:FADb}), with~$\xi(t)$ replaced by~0, is precisely linearly dissipated Hamiltonian dynamics.
\end{proof}
The fact that FAD methods eventually inherit the dissipative property of LDHD is reassuring, but in practice this behavior may not be observed
until very late in the descent process.  As we hinted at earlier, the FAD dynamics, even without friction, can be  convergent.  We explore this property in the numerical experiments of Section~\ref{sec:num_exper}, in the context of the Morse cluster.
\subsection{Exponential convergence of the FAD dynamics}
Under additional conditions on~$f$, for instance~$f$ strongly convex, one can obtain the exponential convergence to the equilibria of Proposition~\ref{prop:equilibria}.

\begin{theorem} \label{thm:convergence}
 Assume that~$\gamma > 0$, that $A(x)$ is symmetric positive semidefinite for all~$x \in \R^d$ and bounded over compact sets, and that there exist~$a,b > 0$ such that
 \begin{equation}
\label{eq:condition_f_exp_cv}
a\left[f(x)-f(x^*)\right] + b \|x-x^*\|^2 \leq (x-x^*)\cdot \left( \nabla f(x)-\nabla f(x^*) \right).
 \end{equation}
 Then, for any initial condition~$(x_0,p_0,\xi_0) \in \R^d \times \R^d \times \R_+$, there exist~$\kappa > 0$ and $C \in \R_+$ such that the solution of~\eqref{eq:FADa}-\eqref{eq:FADc} satisfies
 \[
 f(x(t))-f(x^*) + \|p(t)\| + \xi(t) \leq C \rme^{-\kappa t}.
 \]
\end{theorem}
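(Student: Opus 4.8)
The plan is to promote the weak Lyapunov function~$\cG$ used in the proof of Lemma~\ref{lem:bounds} to a strict one by adding a small position--momentum cross term, exactly as in the classical analysis of (linearly) dissipated Hamiltonian dynamics. Concretely, I would work with
\[
\mathcal{L}(x,p,\xi) = f(x) - f(x^*) + \frac12\|p\|^2 + \frac{\mu}{2}\xi^2 + \varepsilon\,(x-x^*)\cdot p,
\]
for a parameter~$\varepsilon>0$ to be fixed small. Two preliminary facts are needed. First, the hypotheses of Lemma~\ref{lem:bounds} hold (condition~\eqref{eq:condition_f_exp_cv} forces $f(x)\to+\infty$), so the trajectory stays in a fixed compact set, $\|x(t)\|,\|p(t)\|\le R$ and $0\le\xi(t)\le R$; since $A$ is bounded over compact sets, $a_* := \sup_{\|x\|\le R}\|A(x)\|<\infty$. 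Second, integrating~\eqref{eq:condition_f_exp_cv} along the segment from~$x^*$ to~$x$ (using $\nabla f(x^*)=0$) yields the quadratic lower bound $f(x)-f(x^*)\ge \frac{b}{2}\|x-x^*\|^2$. Combined with Young's inequality on the cross term, this shows that, for~$\varepsilon$ small, there are constants $0<c_1\le c_2$ with
\[
c_1\big[(f(x)-f(x^*)) + \|p\|^2 + \xi^2\big] \le \mathcal{L} \le c_2\big[(f(x)-f(x^*)) + \|p\|^2 + \xi^2\big]
\]
on the trajectory, so $\mathcal{L}$ is coercive and controls the target quantities.

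Next I would differentiate $\mathcal{L}$ along the flow. From the computation in Lemma~\ref{lem:bounds}, $\frac{d}{dt}[\cG] = -\gamma\|p\|^2 - \alpha\mu\xi^2$, while
\[
\frac{d}{dt}\big[(x-x^*)\cdot p\big] = \|p\|^2 + (x-x^*)\cdot\big(F(x) - \xi A(x)p - \gamma p\big).
\]
By~\eqref{eq:condition_f_exp_cv}, $(x-x^*)\cdot F(x) = -(x-x^*)\cdot(\nabla f(x)-\nabla f(x^*)) \le -a[f(x)-f(x^*)] - b\|x-x^*\|^2$. The remaining two terms are the only place where the nonlinear coupling matters; using $\xi\le R$, $\|A(x)\|\le a_*$ and Young's inequality,
\[
\big|\xi(x-x^*)\cdot A(x)p\big| + \gamma\big|(x-x^*)\cdot p\big| \le \frac{b}{2}\|x-x^*\|^2 + C_*\|p\|^2,
\]
with $C_*$ depending only on $R,a_*,\gamma,b$. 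Collecting terms gives
\[
\dot{\mathcal{L}} \le -\big(\gamma - \varepsilon(1+C_*)\big)\|p\|^2 - \alpha\mu\xi^2 - \varepsilon a\big[f(x)-f(x^*)\big] - \frac{\varepsilon b}{2}\|x-x^*\|^2.
\]
Choosing $\varepsilon$ so small that $\gamma - \varepsilon(1+C_*)\ge \gamma/2$, this bounds $\dot{\mathcal{L}}$ by $-c_3\big[(f(x)-f(x^*)) + \|p\|^2 + \xi^2\big]$ for some $c_3>0$, hence $\dot{\mathcal{L}} \le -(c_3/c_2)\,\mathcal{L}$. Grönwall's lemma then yields $\mathcal{L}(t) \le \mathcal{L}(0)\,\rme^{-\kappa_0 t}$ with $\kappa_0 = c_3/c_2$, and the lower equivalence gives $f(x(t))-f(x^*)\le c_1^{-1}\mathcal{L}(0)\rme^{-\kappa_0 t}$ and $\|p(t)\|,\xi(t)\le \sqrt{c_1^{-1}\mathcal{L}(0)}\,\rme^{-\kappa_0 t/2}$; summing the three bounds yields the statement with $\kappa = \kappa_0/2$ and $C$ depending on the initial data.

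The main obstacle is the control of the nonlinear friction coupling $\xi A(x)p$ appearing in $\frac{d}{dt}[(x-x^*)\cdot p]$: it is precisely there that one must invoke the a priori compactness from Lemma~\ref{lem:bounds} together with the boundedness of $A$ on compact sets, and then absorb the resulting term into the physical dissipation $-\gamma\|p\|^2$ by taking $\varepsilon$ small — so the argument genuinely uses $\gamma>0$. A secondary technical point is establishing the coercivity/two-sided bound on $\mathcal{L}$, which relies on the quadratic lower bound $f(x)-f(x^*)\ge \frac{b}{2}\|x-x^*\|^2$ extracted from~\eqref{eq:condition_f_exp_cv}; note that this hypothesis is weaker than strong convexity but still suffices. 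Everything else (the derivative identity for $\cG$, the Young estimates, Grönwall) is routine.
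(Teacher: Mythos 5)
Your proposal is correct and follows essentially the same route as the paper: a Lyapunov function built from $f(x)-f(x^*)+\tfrac12\|p\|^2+\tfrac{\mu}{2}\xi^2$ plus a small cross term $\varepsilon\,(x-x^*)\cdot p$, with the nonlinear coupling $\xi A(x)p$ absorbed into the $-\gamma\|p\|^2$ dissipation via the a priori compactness of Lemma~\ref{lem:bounds} and the boundedness of $A$ on compact sets, followed by a Gronwall argument. The only (immaterial) difference is that the paper additionally includes a term $\varepsilon\|x-x^*\|^2$ in its Lyapunov function to get the two-sided equivalence directly, whereas you obtain the needed coercivity from the quadratic lower bound $f(x)-f(x^*)\geq \tfrac{b}{2}\|x-x^*\|^2$ extracted from~\eqref{eq:condition_f_exp_cv}.
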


The condition~\eqref{eq:condition_f_exp_cv} is satisfied for~$f \in C^2$ with $0 < m \leq \nabla^2 f(x) \leq M < +\infty$. A similar condition is considered in~\cite{mattingly2002ergodicity}. 

\begin{proof}
Consider~$\varepsilon>0$ and introduce the following Lyapunov function (which is, up to the additional term~$\xi^2$, a  common choice for stochastic Langevin dynamics~\cite{mattingly2002ergodicity}, also considered for dissipated Hamiltonian dynamics~\cite{mo2022}):
\begin{equation}
\label{eq:Lyapunov_cWe}    
\cWe(x,p,\xi) = f(x)-f(x^*) + \frac12 \|p\|^2 + \frac{\mu}{2}\xi^2 + \varepsilon (x-x^*)\cdot p + \varepsilon \|x(t)-x^*\|^2.
\end{equation}
A discrete Cauchy--Schwarz inequality implies, for~$\varepsilon \in [0,1/2]$, the lower bound
\begin{equation}
\label{eq:lower_bound_sWe}
\cWe(x,p,\xi) \geq f(x)-f(x^*) + \frac14 \|p\|^2 + \frac{\mu}{2} \xi^2 + \frac{\varepsilon}{2} \|x(t)-x^*\|^2,
\end{equation}
as well as the upper bound
\begin{equation}
\label{eq:upper_bound_sWe}
\cWe(x,p,\xi) \leq f(x)-f(x^*) + \frac34 \|p\|^2 + \frac{\mu}{2} \xi^2 + \frac{3\varepsilon}{2} \|x(t)-x^*\|^2.
\end{equation}
A simple computation shows that the function $\sWe(t) = \cWe(x(t),p(t),\xi(t))$ satisfies
\[
\begin{aligned}
    \dot{\sWe}(t) & = \nabla f(x(t)) \cdot \dot{x}(t) + p(t) \dot{p}(t)  + \mu \xi(t) \dot{\xi}(t) + \varepsilon p(t)\cdot \dot{x}(t) \\
    & \qquad + \varepsilon \dot{p}(t) \cdot (x(t)-x^*) + 2\varepsilon \dot{x}(t) \cdot (x(t)-x^*)\\ 
    & = -(\gamma-\varepsilon) \|p(t)\|^2 - \alpha \mu \xi(t)^2 \\
    & \qquad - \varepsilon (x(t)-x^*)\cdot \nabla f(x(t)) + \varepsilon (x(t)-x^*) \cdot \left[2-\gamma-\xi(t)A(x(t))\right] p(t) \\ 
    & \leq -(\gamma-\varepsilon) \|p(t)\|^2 - \alpha \mu \xi(t)^2 - \varepsilon (x(t)-x^*)\cdot \nabla f(x(t)) \\
    & \qquad + \varepsilon (2+\gamma+R\|A(x(t))\|) \left( \delta \|x(t)-x^*\|^2 + \frac{1}{4\delta} \|p(t)\|^2\right),
\end{aligned}
\]
where we used a Cauchy--Schwarz inequality and Lemma~\ref{lem:bounds}. Note that~$\|A(x(t))\| \leq \mathcal{A}<+\infty$ since~$A$ is bounded on compact sets and~$x(t)$ is uniformly bounded. We then set~$\delta = b/[2(2+\gamma+R \mathcal{A})]$ 
in order to use~\eqref{eq:condition_f_exp_cv}. In view of the upper bound~\eqref{eq:upper_bound_sWe}, this leads to
\[
\begin{aligned}
\dot{\sWe}(t) & \leq -\left(\gamma-\varepsilon\left[1+\frac{(2+\gamma+R\mathcal{A})^2}{2b}\right]\right) \|p(t)\|^2 - \alpha \mu \xi(t)^2 - \frac{b\varepsilon}{2} \|x(t)-x^*\|^2 \\
& \qquad - a \varepsilon (f(x(t))-f(x^*)) \\
& \leq -\min\left\{2\alpha,a\varepsilon,\frac{b}{3},\frac43\left(\gamma-\varepsilon\left[1+\frac{(2+\gamma+R\mathcal{A})^2}{2b}\right]\right)\right\} \sWe(t),
\end{aligned}
\]
from which one can conclude to the exponential convergence of~$\sWe(t)$ to~0 by a Gronwall inequality, upon choosing~$\varepsilon >0$ small enough (which can be optimized in order to get an explicit lower bound on the convergence rate). This implies the desired inequality.
\end{proof}

Note that the convergence rate a priori depends on the chosen initial condition.  This is due to a technical limitation in the way we prove  exponential convergence, since the dynamics is compared to a family of dissipated Hamiltonian dynamics, of which we consider the worse case scenario. More importantly, our proof currently only holds for~$\gamma > 0$. We believe that the convergence result can be improved, and extended to more general situations. This is however left for future work.
%
%
%
%
\section{Numerical methods}
\label{sec:numerical_schemes}

The dynamical system (\ref{eq:FADa})-(\ref{eq:FADc}) may be unfamiliar unless one has prior experience of Nos\'{e} dynamics.  Since a reliable numerical method is essential in situations where many thousands of steps may be taken to determine a minimum, we discuss the integration method in detail here.  We first mention that in our experiments, we adopted a simple splitting 
integrator to propagate linearly dissipated Hamiltonian dynamics.  Following the standard method for such systems, we break the system into three parts:  (i) $\dot{x} = p$, $\dot{p} = 0$, (ii) $\dot{x} = 0$, $\dot{p} = -\nabla f(x)$, and (iii) $\dot{x} = 0$, $\dot{p} = -\gamma p$.  Each of these systems is easily integrated so we compose the flows to obtain a simple (and reliable) numerical scheme.  The same general procedure can be used to integrate the dynamical equations of friction-adaptive descent.

\subsection{Splitting scheme for Linearly Dissipated Hamiltonian Dynamics (LDHD)}\label{sec:LDHD_splitting}

We discretize the system of equations (\ref{eq:lindiss-1})-(\ref{eq:lindiss-2}) using the splitting scheme presented in (\ref{eq:LDHD-splitting}). Splitting methods are applicable when the dynamics we are trying to solve can be split into a sum of parts each of which is easier to integrate than the original dynamics. For an introduction to splitting methods we refer the reader to \cite{mclachlan2002splitting}. Here, the dynamics is decomposed as
\begin{equation}\label{eq:LDHD-splitting}
\begin{pmatrix}\dot{x} \\ \dot{p}  \end{pmatrix}
 =\underbrace{\begin{pmatrix} p\\0 \end{pmatrix}}_{\text{A}} + 
  \underbrace{\begin{pmatrix}0\\F(x) \end{pmatrix}}_{\text{B}} +
  \underbrace{\begin{pmatrix}0\\- \gamma p \end{pmatrix}}_{\text{D}} .
\end{equation}
We use a symmetric integrator which we denote as ``BADAB''. For each full step of the numerical integrator, we want to solve each part (A,B,D) of the dynamics over a time step~$\delta t$. Since the letter ``B'' appears twice in the integrator, each of the ``B'' steps is integrated with a stepsize of $\delta t/2$. The same holds for the ``A'' step, but not for the ``D'' step, which will be integrated over with a full time step $\delta t$. This is the integrator that we use for all numerical experiments involving LDHD. More details on solving A,B,D are given below. 

\subsection{A splitting scheme for friction-adaptive descent}
\label{sec:first_splitting}

One possible additive decomposition of the FAD dynamics is the following:
\begin{equation}\label{eq:FAD-splitting}
\begin{pmatrix}\dot{x} \\ \dot{p} \\ \dot{\xi} \end{pmatrix}
 =\underbrace{\begin{pmatrix} p\\0\\0 \end{pmatrix}}_{\text{A}} + 
  \underbrace{\begin{pmatrix}0\\F(x)\\0 \end{pmatrix}}_{\text{B}} +
  \underbrace{\begin{pmatrix}0\\-\xi A(x) p\\ \frac{p^T A(x)p}{\mu} - \alpha \xi \end{pmatrix}}_{\text{C}}+
  \underbrace{\begin{pmatrix}0\\- \gamma p\\ 0 \end{pmatrix}}_{\text{D}} .
\end{equation}
To design an algorithm, we combine the steps defined above to create an integration scheme for our method. We discuss another decomposition in Section~\ref{sec:alternative_splitting} below.

In general ABCD represents an integrator where A is solved for time $\delta t$ then B for time $\delta t$, etc.  Since the steps do not commute, we get many different numerical methods on the alphabet A,B,C,D.  Guided by past experience we favor schemes which are symmetric in form (palindromic letter sequences).  

There is an important efficiency constraint on the arrangement of steps.   Since both B and C involve the gradient of~$f$, if we separate these by any step that changes $x$ we will need to perform a second gradient calculation. Since the gradient is very likely the most significant cost in the integrator, it is important to avoid this extra calculation. This suggests to favor schemes that keep B and C together.   It makes sense to combine them at the middle of the step, particularly where gradients are likely to be based on subsampling, thus we suggest to use the integrator  DABCBAD, 
where each evaluation of A,B, and D would be performed for a half timestep. 
With the exception of part C, each of the terms can be integrated exactly. The part C itself can be solved by a further round of splitting.

Let us first consider the A, B and D parts.  The equations of the A part are 
\begin{align*}
\dot{x} & = p,\\
\dot{p} & = 0,\\
\dot{\xi} & = 0.
\end{align*}
The variables~$p$ and $\xi$ are constant during this step and the solution at time~$t$, given initial conditions $(x_0,p_0,\xi_0)$, is just $x(t) = x_0+t p_0$.  In a similar way the positions and the auxiliary variable are constant during the B step, so the force~$F$ is constant, and the update of momenta is therefore $p(t) =p_0+t F(x_0)$.   Because of the simple choice of linear dissipation, the D update boils down to $p(t) = \exp(-t\gamma)p_0$.  This leaves just the C part of the splitting to discuss.

The equations for C are 
\begin{align}
    \dot{p} & = -\xi A(x)p, \label{eq:c1}\\
    \dot{\xi} & = \frac{p^T A(x)p}{\mu} -\alpha \xi. \label{eq:c2}
\end{align}
During this step $\dot{x}=0$ so $x$ is constant and we can write $A(x)\equiv A$ (constant).
We propose to solve this system by one of the following techniques, depending on the method choice.
%
\subsubsection{The ``C'' step for KFAD/FFAD/mixture coupling}
The solution is greatly simplified in certain cases, e.g. $A=I$ (KFAD) or $A=FF^T$ (FFAD). More generally, we can solve it readily for any mixture coupling of the form
\[
A = \lambda_1 I+ \lambda_2 FF^T,
\]
which addresses both of the two special cases as well as more general couplings.

We rely on the following leapfrog scheme:
\begin{align}
p_{n+1/2} & = \exp(-(\delta t/2)\xi_n A) p_n,\\
\xi_{n+1} & = \exp(-\delta t\alpha) \xi_n + \frac{1}{\alpha \mu} (1-\exp(-\delta t\alpha)) p_{n+1/2}^T A p_{n+1/2}, \label{eqn:middle}\\
p_{n+1} & = \exp(-(\delta t/2)\xi_{n+1} A) p_{n+1/2},
\end{align}
To calculate the matrix exponentials we use the fact that $A$ can be written as 
\[
A = \lambda_1 I + \lambda_2 \|F\|^2 \Pi,
\]
where 
\[
\Pi = \|F\|^{-2} F F^T 
\]
is an orthogonal projector. Rodrigues' formula gives, for~$\tau \in \mathbb{R}$,
\[
\exp(\tau \Pi) = I + (\rme^{\tau}-1) \Pi.
\]
Thus, for any $p \in \mathbb{R}^d$,
\[
\exp(\tau \Pi) p = p + (\rme^{\tau}-1) \Pi p = p+ (\rme^{\tau}-1) \|F\|^{-2} (p\cdot F) F.
\]
This gives, for $\tau_n = -\delta t\lambda_2 \|F\|^2 \xi_n/2$,
\[
p_{n+1/2}  = \exp(-(\delta t/2) \xi_n \lambda_1) \left ( p_n+ (\rme^{\tau_n}-1)\|F\|^{-2} (p_n \cdot F) F \right ).
\]
After computing $\xi_{n+1}$ using (\ref{eqn:middle}), we then follow by computing $\tau_{n+1}= -\delta t\lambda_2 \|F\|^2 \xi_{n+1}/2$ and setting
\[
p_{n+1}  = \exp(-(\delta t/2) \xi_{n+1} \lambda_1) \left ( p_{n+1/2} + (\rme^{\tau_{n+1}}-1)\|F\|^{-2} (p_{n+1/2} \cdot F) F \right ).
\]
%
\subsubsection{The ``C'' step for general matrices $A(x)$}
\label{sec:C_step_general_A}
In the general case we cannot simplify the matrix exponentials as above. Instead we suggest to use the following linearly implicit scheme (here $A\equiv A(x)$ is the matrix fixed at the start of the C step):
\[
\begin{aligned}
p_{n+1/2} & = p_n - (\delta t/2) \xi_n A p_{n+1/2},\\
\xi_{n+1} & = \rme^{-\alpha \delta t/2} \xi_n + (\mu\alpha)^{-1} (1- \exp(-\alpha \delta t) ) p_{n+1/2}^TAp_{n+1/2} ,\\
p_{n+1} & = p_{n+1/2} - (\delta t/2) \xi_{n+1} A p_{n+1/2}.
\end{aligned}
\]
%
\subsection{Alternative solution for higher accuracy}
\label{sec:alternative_splitting}
Another possible splitting scheme for the FAD dynamics is based on the following decomposition of the dynamics, which differs from the one considered in Section~\ref{sec:first_splitting} in the way the C,D parts are decomposed:
\begin{equation}\label{eq:FAD-splitting2}
\begin{pmatrix}\dot{x} \\ \dot{p} \\ \dot{\xi} \end{pmatrix}
 =\underbrace{\begin{pmatrix} p\\0\\0 \end{pmatrix}}_{\text{A}} + 
  \underbrace{\begin{pmatrix}0\\F(x)\\0 \end{pmatrix}}_{\text{B}} +
  \underbrace{\begin{pmatrix}0\\-\xi A(x) p\\ \frac{p^T A(x)p}{\mu} \end{pmatrix}}_{\text{C'}}+
  \underbrace{\begin{pmatrix}0\\- \gamma p\\ -\alpha \xi \end{pmatrix}}_{\text{D'}} .
\end{equation}
We derive the integrator for C' here as the other parts of the splitting scheme can be integrated analytically, as discussed in Section~\ref{sec:first_splitting}.  As for the previous splitting, during the evolution of the C' system, the matrix $A(x)\equiv A$ (i.e., is fixed).  There are several cases to consider.

\par\noindent {\bf Case I.}  If $A$ is a projector, $A^2=A$,  we proceed as follows.
Taking the dot product of~\eqref{eq:c1} by~$p$, we find equations for $\xi$ and $\omega = p^T Ap$ of the form
\begin{align}
\dot{\omega } & = -2\omega \xi, \label{eq:xiomegaeq1}\\ 
\dot{\xi}  & =  \omega/\mu . \label{eq:xiomegaeq2}
\end{align}
Once $\xi$ has been found, the solution of (\ref{eq:c1}) requires solving 
\[
\dot{p}(t) =  -\xi(t) A p. 
\]
In the case that $A$ is the identity (KFAD) the calculation is trivial.  It also easily solved in other settings since the time-dependency in the right hand side is scalar, in other words~$\xi(t) A$ commutes with $\xi(t') A$ for any $t,t' \geq 0$.  In this case the solution is just
\begin{equation}
p(t) = \exp\left ( \eta(t) A \right ) p(0),\label{eq:peqalt2}
\end{equation}
where $\eta(t) = -\int_0^t \xi(s) {\rm d} s$.
We can compute the exponential in (\ref{eq:peqalt2}) using Rodrigues' formula.

The equations (\ref{eq:xiomegaeq1})-(\ref{eq:xiomegaeq2}) are completely integrable, but their exact solution requires the integration of transcendental functions and their subsequent inversion. 
Instead, we solved the two-dimensional system by further splitting and integration with smaller
timesteps, in a multiple time-stepping approach~\cite{Tu1992}.

This scheme can be implemented with any desired accuracy.  We used this integrator with good results in our tests of atomic clusters in Section 6.

\par\noindent {\bf Case II.} We next consider $A = \lambda_1 I + \lambda_2 \Pi$, where $\Pi$ is a projector.  Then we can define $\theta = \|p\|^2$ and $\omega = p^T \Pi p$.  After some calculation, this results in a closed system of three equations in the three variables $\theta$, $\omega$, and $\xi$.  These can be solved to high accuracy using splitting.

Another special case arises if $A$ is either diagonal or easily diagonalized by some orthogonal matrix, that is $A=XDX^T$ where $D$ is diagonal. Then  splitting and multiple timestepping gives a high accuracy solution for little additional work.
%
%
%
\section{Convergence analysis}
%
To discuss convergence to the minimizer in the case of the discretization scheme, we first observe that FAD methods can be viewed as dissipated versions of Hamiltonian dynamics.  In the simplest case we consider the system as being evolved under a Verlet discretization in combination with steps that introduce linear damping and nonlinear controlled damping (through auxiliary equations). 
The analysis we provide in this section proceeds in two steps: we first check whether stationary configurations of discretized dynamics indeed correspond to critical points of the continuous dynamics; and then proceed to prove longtime convergence of the discrete dynamics to the equilibria specified in Proposition~\ref{prop:equilibria}. 

The analysis is conducted for a simple first order splitting scheme. A similar analysis can be conducted for other schemes, such as the ones derived in Section~\ref{sec:numerical_schemes}. We however consider here yet another scheme, simpler in its final form than the schemes considered in Section~\ref{sec:numerical_schemes}, to simplify the presentation. 

More precisely, we consider the C'D'BA splitting, where the C' part is integrated by analytically updating the $p$ variable with $\xi A(x)$ fixed, and then adjusting the $\xi$ variable to preserve the quadratic invariant $\|p\|^2+\mu \xi^2$ (associated to the subdynamics~C'). This scheme is slightly simpler to analyze than the schemes of the previous section. It corresponds to the following sequence of updates:
\begin{align}
   \wp_{n+1/2} & = \rme^{-A(x_n)\xi_n\Dt} p_n, \label{eq:cdba_1}\\
   \wxi_{n+1} & = \xi_n \sqrt{1 + p_n^T \frac{I-\rme^{-2A(x_n)\xi_n\Dt}}{\mu \xi_n^2} p_n}, \label{eq:cdba_2}\\
   \wp_{n+1} & = \rme^{-\gamma \Dt}\wp_{n+1/2}, \label{eq:cdba_3}\\
   \xi_{n+1} & = \rme^{-\alpha \Dt} \wxi_{n+1}, \label{eq:cdba_4}\\
   p_{n+1} & = \wp_{n+1} - \Dt \nabla f(x_n), \label{eq:cdba_5}\\
   x_{n+1} & = x_n + \Dt p_{n+1}. \label{eq:cdba_6}    
\end{align}
Upon introducing the matrix $\beta_{n,\Dt} = \rme^{-(\gamma I+\xi_n A(x_n))\Dt} \in \R^{d \times d}$, this scheme can be rewritten as
\begin{align}
   p_{n+1} & = \beta_{n,\Dt} p_n - \Dt \nabla f(x_n), \label{eq:cdba_s1}\\
   x_{n+1} & = x_n + \Dt \beta_{n,\Dt} p_n - \Dt^2 \nabla f(x_n), \label{eq:cdba_s2}\\
   \xi_{n+1} & = \rme^{-\alpha \Dt} \xi_n \sqrt{1 + p_n^T \frac{I-\rme^{-2A(x_n)\xi_n\Dt}}{\mu \xi_n^2} p_n}. \label{eq:cdba_s3}
\end{align}

Before proceeding to a convergence analysis of the discrete dynamics, we examine the {\em regularity} of the numerical method, that is, whether the equilibria of the discretized dynamics coincide with those of the continuous dynamics. We wish to show that the equilibrium for the continuous dynamics $(x^*, p^*, \xi^* )$ derived in Proposition~\ref{prop:equilibria} is also an equilibrium for the discretised system, and further to ensure that the proposed discrete dynamics does not introduce ``artificial" stationary points which do not correspond to equilibria of the continuous system.
\begin{proposition}
\label{prop:regularity_cv_discretized}
A state $z = (x, p, \xi)$ is an equilibrium point of the discretized dynamics C'D'BA as presented in equations (\ref{eq:cdba_1})-(\ref{eq:cdba_6}) if and only if $z$ is an equilibrium of the continuous dynamics given by equations (\ref{eq:FADa})-(\ref{eq:FADc}).
\end{proposition}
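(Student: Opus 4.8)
The plan is to prove the two implications separately, using either the raw update \eqref{eq:cdba_1}--\eqref{eq:cdba_6} or its consolidated form \eqref{eq:cdba_s1}--\eqref{eq:cdba_s3}, together with the characterization of the continuous equilibria from Proposition~\ref{prop:equilibria} (namely $p^*=0$, $\nabla f(x^*)=0$, $\xi^*=0$). The only quantitative facts I will invoke are $\Dt>0$ and $\alpha>0$ (the latter already implicit in Proposition~\ref{prop:equilibria}).

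For the direction ``discrete equilibrium $\Rightarrow$ continuous equilibrium'', suppose $z=(x,p,\xi)$ is a fixed point of the one-step map. First I would read off \eqref{eq:cdba_6}: the condition $x_{n+1}=x_n$ gives $\Dt\, p_{n+1}=0$, hence $p_{n+1}=0$, and since $p_{n+1}=p_n=p$ at a fixed point, $p=0$. Substituting $p=0$ into \eqref{eq:cdba_1} and \eqref{eq:cdba_3} yields $\wp_{n+1/2}=\wp_{n+1}=0$, so \eqref{eq:cdba_5} becomes $0=p_{n+1}=-\Dt\,\nabla f(x)$, i.e.\ $\nabla f(x)=0$. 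For the auxiliary variable, with $p=0$ the radicand in \eqref{eq:cdba_2} equals $1$, so $\wxi_{n+1}=\xi_n$, and then \eqref{eq:cdba_4} gives $\xi_{n+1}=\rme^{-\alpha\Dt}\xi_n$; the fixed-point condition $\xi_{n+1}=\xi_n$ together with $\rme^{-\alpha\Dt}\neq 1$ forces $\xi=0$. Hence $z=(x,0,0)$ with $\nabla f(x)=0$, which by Proposition~\ref{prop:equilibria} is an equilibrium of \eqref{eq:FADa}--\eqref{eq:FADc}.

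For the converse I would substitute an arbitrary continuous equilibrium $z=(x^*,0,0)$, $\nabla f(x^*)=0$, directly into \eqref{eq:cdba_1}--\eqref{eq:cdba_6} and verify it is a fixed point: \eqref{eq:cdba_1} gives $\wp_{1/2}=\rme^{-A(x^*)\xi^*\Dt}\,p^*=0$, \eqref{eq:cdba_3} gives $\wp_1=0$, the radicand in \eqref{eq:cdba_2} is trivial so $\wxi_1=0$ and \eqref{eq:cdba_4} gives $\xi_1=0$, \eqref{eq:cdba_5} gives $p_1=-\Dt\,\nabla f(x^*)=0$, and finally \eqref{eq:cdba_6} gives $x_1=x^*$. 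So the state is returned unchanged.

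The argument is short; the only delicate point is the apparent singularity of \eqref{eq:cdba_2} at $\xi_n=0$ (the factor $\xi_n$ in front meeting the $\xi_n^2$ in the denominator). I would resolve this by recalling that the C$'$ substep conserves the quadratic invariant $\|p\|^2+\mu\xi^2$, so that \eqref{eq:cdba_2} is equivalent to $\wxi_{n+1}^2=\xi_n^2+\mu^{-1}\bigl(\|p_n\|^2-\|\wp_{n+1/2}\|^2\bigr)$, a manifestly regular expression (and nonnegative, since $\|\wp_{n+1/2}\|\le\|p_n\|$ when $\xi_n\ge 0$ and $A(x_n)\succeq 0$) which gives $\wxi_{n+1}=0$ whenever $\xi_n=0$. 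With this reading, both computations above go through verbatim, including in the degenerate case $\xi_n=0$, and no other step presents any difficulty.
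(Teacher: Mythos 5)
Your proof is correct and follows essentially the same route as the paper: reading $p=0$ off the position update, deducing $\nabla f(x)=0$ from the momentum update, and forcing $\xi=0$ from $\xi_{n+1}=\rme^{-\alpha\Dt}\xi_n$ with $\alpha\Dt>0$, plus the trivial verification in the other direction. Your handling of the apparent singularity at $\xi_n=0$ via the invariant $\|p\|^2+\mu\xi^2$ amounts to the same reformulation the paper uses when it rewrites \eqref{eq:cdba_s3} with $\xi_n$ moved inside the square root.
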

\begin{proof}
    Let $z^* = (x^*, p^*, \xi^*)$ be an equilibrium point of the continuous dynamics (\ref{eq:FADa})-(\ref{eq:FADc}). We have shown in Proposition~\ref{prop:equilibria} that such an equilibrium is of the form  $z^* = (x^*,0,0 )$, where $\nabla f(x^*) = 0$.
    
    It is trivial to see that if we start with $z_n = (x^*,0,0)$ and take one step of the discretized dynamics, i.e. apply the scheme (\ref{eq:cdba_1})-(\ref{eq:cdba_6}) , we obtain $z_{n+1} = (x^*,0,0)$. 

    We next show that any equilibrium point of the discrete dynamics is an equilibrium of the continuous dynamics.
    Consider an equilibrium point $z_n$  of the system of equations (\ref{eq:cdba_1})-(\ref{eq:cdba_6}). Then, starting from $z_n$ and taking a step of the discretised dynamics, one remains at the equilibrium, i.e. $z_{n+1} = z_n$.   Setting $x_{n+1} = x_n$ in~\eqref{eq:cdba_6} gives us $p_{n+1}=0$. This implies $p_n=p_{n+1}=0$ as~$z_n$ is an equilibrium point. Using~$p_n = 0$, \eqref{eq:cdba_s1} yields $\nabla f(x_n) = 0$.
    Finally, we reformulate~\eqref{eq:cdba_s3} as 
    \[
    \xi_{n+1} = \rme^{-\alpha \Dt}\sqrt{\xi_n^2 + p_n^T \frac{I-\rme^{-2A(x_n)\xi_n\Dt}}{\mu } p_n}.
    \]
    Using that $\xi_{n+1} = \xi_n$ and $p_n = 0$, we obtain $\xi_{n}^2 = e^{-2a \delta t} \xi_n^2 \Rightarrow \xi_n = 0$ since $\alpha, \delta t \neq 0$. 
    Therefore, we have that $z_n = z^*$.
\end{proof}
\begin{proposition}
\label{prop:cv_discretized}
Assume that $f \in C^2$ with $0 < m \leq \nabla^2 f(x) \leq M < +\infty$, that~$A$ is a function with values in the space of symmetric positive semidefinite $d \times d$~matrices, bounded on compact sets, and that~$\gamma > 0$. Fix~$L \in \mathbb{R}_+$. Then, for any initial condition~$(x_0,p_0,\xi_0) \in \R^d \times \R^d \times \R_+$ such that
\[
\|x_0 \|+\|p_0\|+\xi_0 \leq L,
\]
there exist~$\Dt^\star>0$, $\kappa>0$ and~$C \in \R_+$ (depending on~$L$) such that
\[
\forall \Dt \in (0,\Dt^\star), \quad \forall n\geq 0, \qquad f(x_n) -f(x^*) + \|p_n\|^2 + \xi_n^2 \leq C \rme^{-\kappa n\Dt}.
\]
\end{proposition}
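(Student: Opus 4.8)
The plan is to mirror the continuous-time Lyapunov argument from Theorem~\ref{thm:convergence} at the discrete level, using a discrete analogue of the function $\cWe$. First I would establish a uniform \emph{a priori} bound: starting from $\|x_0\|+\|p_0\|+\xi_0 \le L$, I want to show that $(x_n,p_n,\xi_n)$ stays in a compact set $K_L$ for all $n$, provided $\Dt$ is small enough. The natural tool is the ``energy'' $\cG(x,p,\xi) = f(x)-f(x^*) + \tfrac12\|p\|^2 + \tfrac\mu2 \xi^2$ from Lemma~\ref{lem:bounds}. Using the rewritten form \eqref{eq:cdba_s1}--\eqref{eq:cdba_s3}, one checks that $\cG(z_{n+1}) \le \cG(z_n) + \mathcal{O}(\Dt^2)(1+\cG(z_n))$: the C$'$ step exactly preserves $\tfrac12\|p\|^2 + \tfrac\mu2\xi^2$ before the contractions, the D$'$ step strictly decreases it (factors $\rme^{-2\gamma\Dt}$ and $\rme^{-2\alpha\Dt}$), the B step changes $\tfrac12\|p\|^2$ by $-\Dt \nabla f(x_n)\cdot \wp_{n+1} + \tfrac{\Dt^2}{2}\|\nabla f(x_n)\|^2$, and the A step changes $f$ by $\Dt \nabla f\cdot p_{n+1} + \mathcal{O}(\Dt^2\|p_{n+1}\|^2)$ using $M$-smoothness; the first-order cross terms cancel up to $\mathcal{O}(\Dt^2)$. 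A discrete Gronwall argument over the (a priori unbounded) time horizon does not immediately work, so instead I would combine this with the strict dissipation to get a genuine supermartingale-type inequality $\cG(z_{n+1}) \le (1-c\Dt)\cG(z_n) + C'\Dt^2$ once we know $z_n\in K_L$, and bootstrap: the bound keeps the trajectory in $K_L$, which in turn validates the bound.

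Next, following the structure of the proof of Theorem~\ref{thm:convergence}, I would introduce the modified Lyapunov function $V_\varepsilon(x,p,\xi) = \cG(x,p,\xi) + \varepsilon (x-x^*)\cdot p + \varepsilon\|x-x^*\|^2$, which satisfies the same two-sided bounds \eqref{eq:lower_bound_sWe}--\eqref{eq:upper_bound_sWe} for $\varepsilon\in[0,1/2]$. The goal is to show $V_\varepsilon(z_{n+1}) \le (1-\kappa\Dt) V_\varepsilon(z_n)$ for suitable $\varepsilon,\Dt$ small, uniformly over $z_n\in K_L$. Expanding $V_\varepsilon(z_{n+1}) - V_\varepsilon(z_n)$ using \eqref{eq:cdba_s1}--\eqref{eq:cdba_s3}: the $\cG$ part contributes $-\gamma\Dt\|p_n\|^2 - \alpha\mu\Dt\xi_n^2 + \mathcal{O}(\Dt^2)(\cdots)$ as above (here I need the strict decrease from the D$'$ contractions, expanded to first order, plus the exact invariance of the quadratic form under the first sub-step); the $\varepsilon$-terms contribute, to leading order in $\Dt$, exactly the same combination as in continuous time, namely $\varepsilon\Dt\big[ -\|p_n\|^2 - (x_n-x^*)\cdot\nabla f(x_n) + (x_n-x^*)\cdot(2-\gamma-\xi_n A(x_n))p_n \big]$ plus $\mathcal{O}(\varepsilon\Dt^2)$. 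Then I apply the Cauchy--Schwarz splitting of the cross term, use $\|A(x_n)\|\le \mathcal A$ on $K_L$ and the assumption $mI\le \nabla^2 f\le MI$ (which gives \eqref{eq:condition_f_exp_cv} with explicit $a,b$), choose $\delta$ as in the continuous proof, and finally pick $\varepsilon$ small so that the net first-order coefficient is strictly negative in all of $\|p_n\|^2$, $\xi_n^2$, $\|x_n-x^*\|^2$, and $f(x_n)-f(x^*)$. The $\mathcal{O}(\Dt^2)$ remainder — whose constant depends only on $L$, $m$, $M$, $\mathcal A$, $\gamma$, $\alpha$, $\mu$ — is then absorbed by taking $\Dt<\Dt^\star$ small enough, yielding the geometric decay $V_\varepsilon(z_{n+1})\le(1-\kappa\Dt)V_\varepsilon(z_n)$ with $\kappa$ a fixed fraction of the continuous rate. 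Iterating and using the lower bound on $V_\varepsilon$ gives the claimed estimate with $C$ depending on $L$ through $V_\varepsilon(z_0)$ and the compact-set constants.

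The main obstacle I anticipate is controlling the $\xi$-update \eqref{eq:cdba_s3}, which is genuinely nonlinear through the square root and through $\rme^{-2A(x_n)\xi_n\Dt}$, and showing cleanly that it is a contraction up to the right order. The key observation is the identity $\xi_{n+1}^2 = \rme^{-2\alpha\Dt}\big(\xi_n^2 + p_n^T(I-\rme^{-2A(x_n)\xi_n\Dt})p_n/\mu\big)$ from the proof of Proposition~\ref{prop:regularity_cv_discretized}; since $A(x_n)$ is symmetric positive semidefinite and $\xi_n\ge0$, the matrix $I-\rme^{-2A(x_n)\xi_n\Dt}$ is positive semidefinite with norm $\le 2\mathcal A\xi_n\Dt$ on $K_L$ (using $0\le 1-\rme^{-t}\le t$ on the eigenvalues), so $p_n^T(I-\rme^{-2A(x_n)\xi_n\Dt})p_n/\mu \le (2\mathcal A/\mu)\xi_n\Dt\|p_n\|^2$, which is the $\mathcal{O}(\Dt)$ correction entering the $\xi$-balance — but crucially it is multiplied by $\xi_n$, not $\xi_n^2$, so it couples into the $\|p_n\|^2$ budget rather than spoiling the $\xi_n^2$ dissipation; combined with $\rme^{-2\alpha\Dt}\le 1-2\alpha\Dt+2\alpha^2\Dt^2$ this gives $\mu\xi_{n+1}^2 \le \mu\xi_n^2 - 2\alpha\mu\Dt\xi_n^2 + C_L\Dt\,\xi_n\|p_n\|^2 + \mathcal{O}(\Dt^2)$, and the cross term $\xi_n\|p_n\|^2\le \tfrac12(\xi_n^2+\|p_n\|^4)\le \tfrac12 R(\xi_n^2+\|p_n\|^2)$ on $K_L$ is harmless for $\varepsilon,\Dt$ small. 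A secondary nuisance is that, as noted after Theorem~\ref{thm:convergence}, the rate $\kappa$ and constant $C$ legitimately depend on $L$ (through $\mathcal A$ and through the ``worst-case'' dissipated-Hamiltonian comparison), which is why the statement fixes $L$ in advance; this is a feature of the method of proof, not of the scheme, and I would remark on it exactly as the paper does in the continuous case.
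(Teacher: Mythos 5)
Your overall strategy --- a discrete Lyapunov function of the form $f+\tfrac12\|p\|^2+(\text{weight})\,\mu\xi^2+\varepsilon\|x-x^*\|^2+\varepsilon(x-x^*)\cdot p$, combined with a bootstrap keeping the iterates in a compact set so that $\xi_nA(x_n)$, and hence $\Id-\beta_{n,\Dt}$, stays under control --- is exactly the paper's. But there is a genuine gap at the one point where the discrete proof is not a routine transcription of the continuous one: your treatment of the energy-transfer term in the $\xi$-update. You bound $p_n^T(\Id-\rme^{-2\xi_nA(x_n)\Dt})p_n\le 2\mathcal{A}\xi_n\Dt\|p_n\|^2$ and propose to absorb the resulting $+C_L\Dt\,\xi_n\|p_n\|^2$ by Young's inequality against the dissipation $-\gamma\Dt\|p_n\|^2-\alpha\mu\Dt\xi_n^2$. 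On the compact set this produces a \emph{positive} contribution of order $\mathcal{A}R\,\Dt\,(\|p_n\|^2+\xi_n^2)$: it is first order in $\Dt$, its constant grows with $L$, and neither $\Dt\to0$ nor $\varepsilon\to0$ shrinks it relative to the dissipation, so the net coefficient need not be negative once $\mathcal{A}R>\min(\gamma,\alpha\mu)$. This term must cancel, not be absorbed. The paper achieves this by weighting $\mu\xi^2$ with $a=\rme^{2(\alpha-\gamma)\Dt}/2$ rather than $1/2$, so that $a\rme^{-2\alpha\Dt}\,p_n^T(\Id-\rme^{-2\xi_nA(x_n)\Dt})p_n$ cancels exactly against the term $-\tfrac12\rme^{-2\gamma\Dt}\,p_n^T(\Id-\rme^{-2\xi_nA(x_n)\Dt})p_n$ hidden in $\tfrac12\|p_{n+1}\|^2$, leaving the clean dissipation $-\tfrac12\left(1-\rme^{-2\gamma\Dt}\right)\|p_n\|^2$. (Equivalently, with equal weights the transfer term carries the prefactor $\tfrac12(\rme^{-2\alpha\Dt}-\rme^{-2\gamma\Dt})=O(\Dt)$ and is therefore $O(\Dt^2)$ overall --- this is precisely the exact conservation of $\|p\|^2+\mu\xi^2$ under the C$'$ step that you state in your first paragraph and then discard in the key estimate.)

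A second, smaller gap: the first stage of your bootstrap asserts $\cG(z_{n+1})\le(1-c\Dt)\cG(z_n)+C'\Dt^2$. This inequality is false: $\cG$ has no dissipation in the $x$-direction (take $p_n=0$, $\xi_n=0$, $x_n\ne x^*$; then $\cG$ decreases only by $O(\Dt^2)$), and without a genuine contraction the per-step $O(\Dt^2)$ errors are not summable over an infinite horizon, so no uniform-in-$n$ bound follows. The a priori compactness and the geometric decay cannot be separated into two stages; they must be established simultaneously by induction on the full Lyapunov function, as the paper does: the induction hypothesis $\cW(z_n)\le\mathscr{W}_R$ supplies the bound $\|\Id-\beta_{n,\Dt}\|\le 1-\rme^{-(R+\gamma)\Dt}$ needed to control the cross term $\varepsilon\left(\Id-\beta_{n,\Dt}\right)p_n\cdot x_n$ at order $\Dt$, and the resulting one-step contraction propagates the hypothesis to $n+1$. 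Both gaps are reparable, and the second is essentially organizational, but the first requires the specific cancellation above, which is the only non-obvious idea in the paper's proof.
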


The upper bound on the Hessian could be replaced more generally by $M$-smoothness. The lower bound is equivalent to $m$-strong convexity. 

\begin{proof}
  Without loss of generality, and in order to lighten the notation, we assume that~$x^* = 0$ and~$f(x^*) = 0$. We compute the variations of the various terms appearing in the Lyapunov function~\eqref{eq:Lyapunov_cWe}, singling out terms of order~$\Dt$ and gathering terms of order~$\Dt^2$ and higher. The constant~$C$ which allows to bound the remainder term may change from line to line, but is independent of~$\Dt$ and~$\varepsilon$. First, using the upper bound on the Hessian,
\[
\begin{aligned}
f(x_{n+1}) & \leq f(x_n) + \Dt \nabla f(x_n) \cdot p_{n+1} + \frac{ \Dt^2 M}{2} \|p_{n+1}\|^2 \\
& = f(x_n) + \Dt \beta_{n,\Dt} p_n \cdot \nabla f(x_n) + \Dt^2 \left[\frac{M}{2} \|\beta_{n,\Dt}p_n - \Dt \nabla f(x_n)\|^2 - \|\nabla f(x_n)\|^2 \right] \\
& \leq f(x_n) + \Dt \beta_{n,\Dt} p_n \cdot \nabla f(x_n) + C\Dt^2 \left(\|p_n\|^2 + \|\nabla f(x_n)\|^2 \right).
\end{aligned}
\]
Moreover, 
\[
\begin{aligned}
\|x_{n+1}\|^2 & = \|x_n\|^2 + 2 \Dt \beta_{n,\Dt} p_n \cdot x_n + \Dt^2\left[-2x_n \cdot \nabla f(x_n) + p_n^T \beta_{n,2\Dt} p_n \right] \\
& \qquad -2\Dt^3 \beta_{n,\Dt} p_n \cdot \nabla f(x_n) + \Dt^4 \| \nabla f(x_n) \|^2 \\
& \leq \|x_n\|^2 + 2 \Dt \beta_{n,\Dt} p_n \cdot x_n -2\Dt^2 x_n \cdot \nabla f(x_n) + C\Dt^2 \left(\|p_n\|^2 + \|\nabla f(x_n)\|^2 \right).
\end{aligned}
\]
Next,
\[
\begin{aligned}
\|p_{n+1}\|^2 & = p_n^T \beta_{n,2\Dt} p_n - 2 \Dt \beta_{n,\Dt} p_n \cdot \nabla f(x_n) + \Dt^2 \|\nabla f(x_n)\|^2.
\end{aligned}
\]
Additionally,
\begin{align*}
  x_{n+1}\cdot p_{n+1} & = p_n \cdot x_n -(\Id-\beta_{n,\Dt}) p_n \cdot x_n - \Dt x_n \cdot \nabla f(x_n) + \Dt p_n ^T \beta_{n,2\Dt} p_n \\
  &  \qquad -2\Dt^2 \beta_{n,\Dt} p_n \cdot \nabla f(x_n) + \Dt^3 \|\nabla f(x_n)\|^2 \\
  & \leq p_n \cdot x_n - \Dt x_n \cdot \nabla f(x_n) -(\Id-\beta_{n,\Dt}) p_n \cdot x_n + \Dt p_n^T \beta_{n,2\Dt} p_n \\
  & \qquad + C\Dt^2 \left(\|p_n\|^2 + \|\nabla f(x_n)\|^2 \right) \\
  & \leq p_n \cdot x_n - \Dt x_n \cdot \nabla f(x_n) -(\Id-\beta_{n,\Dt}) p_n \cdot x_n + \Dt \rme^{-2\gamma\Dt}\|p_n\|^2 \\
  & \qquad + C\Dt^2 \left(\|p_n\|^2 + \|\nabla f(x_n)\|^2 \right).
\end{align*}
Finally, 
\[
\mu\xi_{n+1}^2 = \rme^{-2\alpha\Dt}\left[\mu\xi_n^2 + p_n^T \left(\Id-\rme^{-2\xi_n A(x_n)\Dt}\right)p_n\right].
\]

We next consider the discrete Lyapunov function
\begin{equation}
\label{eq:Lyapunov_discr}
\cW(x,p,\xi) = f(x) + \frac{\|p\|^2}{2} + a \mu \xi^2 + b \|x\|^2 + c x \cdot p,
\end{equation}
with parameters~$a,b,c$ to be determined. We choose the factor~$1/2$ for~$\|p\|^2$ so that the terms~$\beta_{n,\Dt} p_n \cdot \nabla f(x_n)$ coming from the bounds on~$f(x_{n+1})$ and~$\|p_{n+1}\|^2$ cancel. Then, for~$c$ small enough,
\[
\begin{aligned}
  & \cW(x_{n+1},p_{n+1},\xi_{n+1}) \\
  & \qquad \leq \cW(x_n,p_n,\xi_n) + 2b \Dt \beta_{n,\Dt} p_n \cdot x_n - \frac12 p_n^T (\Id-\beta_{n,2\Dt}) p_n \\
  & \qquad\qquad + c\left[ - \Dt x_n \cdot \nabla f(x_n) -(\Id-\beta_{n,\Dt}) p_n \cdot x_n + \Dt \rme^{-2\gamma\Dt}\|p_n\|^2  \right] \\
  & \qquad\qquad - a \left(1-\rme^{-2\alpha\Dt}\right) \mu\xi_n^2 + a \rme^{-2\alpha\Dt} p_n^T \left(\Id-\rme^{-2\xi_n A(x_n)\Dt}\right)p_n \\
  & \qquad\qquad -2b \Dt^2 x_n \cdot \nabla f(x_n) + C\Dt^2 \left(\|p_n\|^2 + \|\nabla f(x_n)\|^2 \right) \\
  & \qquad = \cW(x_n,p_n,\xi_n) - a \left(1-\rme^{-2\alpha\Dt}\right) \mu\xi_n^2 - (c+2b\Dt)\Dt\, x_n \cdot \nabla f(x_n) \\
  & \qquad\qquad - \left[\frac12 - a \rme^{-2\alpha\Dt} - c \Dt \rme^{-2\gamma\Dt}\right]\|p_n\|^2 + \left[\frac{\rme^{-2\gamma\Dt}}{2}-a\rme^{-2\alpha\Dt}\right] p_n^T \rme^{-2\xi_n A(x_n)\Dt}p_n \\
  & \qquad\qquad + \left(\left[2b \Dt\beta_{n,\Dt} -c(\Id-\beta_{n,\Dt})\right] p_n\right) \cdot x_n + C\Dt^2 \left(\|p_n\|^2 + \|\nabla f(x_n)\|^2 \right).
\end{aligned}
\]
The last equality suggests to choose $a = \rme^{2(\alpha-\gamma)\Dt}/2$.  This allows to remove the factor
$p_n^T \rme^{-2\xi_n A(x_n)\Dt}p_n$. The component proportional to~$c$ in the cross term~$x_n \cdot p_n$ is however not so easy to treat since the crude bound~$\|\Id-\beta_{n,\Dt}\|\|x_n\|\|p_n\|$ would potentially lead to a factor~$\|x_n\|\|p_n\|$ if~$\beta_{n,\Dt}$ is very small, i.e. if~$\xi_n A(x_n)$ is large. We therefore need bounds on~$x_n,\xi_n$ over the iterations in order to make sure that~$\xi_n A(x_n)$ remains uniformly bounded. Such bounds are, in turn, obtained from the estimates on the Lyapunov function.

More precisely, we set $a = \rme^{2(\alpha-\gamma)\Dt}/2$ and $c=b=\varepsilon>0$, so that, using $x \cdot \nabla f(x) \geq 0$, and upon choosing~$\Dt$ and~$\varepsilon$ small enough,
\begin{align} \label{eq:lyapunov_almost_final_inequal}
& \cW(x_{n+1},p_{n+1},\xi_{n+1}) \leq \cW(x_n,p_n,\xi_n) -\varepsilon \Dt \, x_n \cdot \nabla f(x_n) - \frac{\alpha\mu \Dt}{2}\xi_n^2 - \frac{\gamma \Dt}{2} \|p_n\|^2 \nonumber\\
& \qquad \qquad\qquad +\varepsilon\left[2\Dt\beta_{n,\Dt} - (\Id-\beta_{n,\Dt})\right]p_n \cdot x_n + C\Dt^2 \left(\|p_n\|^2 + \|\nabla f(x_n)\|^2 \right),
\end{align}

The assumption $0 < m < \nabla^2 f(x) \leq M$ together with the equalities $f(0)=0$ and $\nabla f(0)=0$ implies through Taylor expansions with integral remainder that~$f(x) \geq m \|x\|^2 / 2$ and~$\|\nabla f(x)\| \leq M \|x\|$. In view of these two inequalities, there exists~$K \in \R_+$ such that, uniformly in~$\varepsilon \in (0,1/2]$,
\[
\|p\|^2 + \|\nabla f(x)\|^2 \leq K \cW(x,p,\xi).
\]

Let us fix~$R>0$. Since $A$ is bounded on compact sets, there exists~$\mathscr{W}_R \in \R_+$ such that, for any~$(x,\xi)$ with~$f(x) + \mu \xi^2 \leq \mathscr{W}_R$, 
\[
\xi A(x) \leq R,
\]
so that 
\begin{equation}
    \label{eq:bound_exp_num_cv}
    \rme^{-\xi A(x)\Dt} \geq \rme^{-R \Dt}. 
\end{equation}
Therefore, if $x$ and $\xi$ are bounded through the inequality~$f(x) + \mu \xi^2 \leq \mathscr{W}_R$, then $\beta_{n,\delta t}$ does not become too small. Indeed, whatever the choice of~$\varepsilon \in (0,1/2]$, and upon choosing~$\Dt$ small enough so that~$a \leq 1$, one obtains from (\ref{eq:Lyapunov_discr}) that $f(x) + \mu \xi^2 \leq \cW(x,p,\xi)$. The bound~$f(x) +\mu \xi^2 \leq \mathscr{W}_R$ therefore holds as long as~$\cW(x,p,\xi) \leq \mathscr{W}_R$, irrespectively of the value of~$\varepsilon \in (0,1/2]$.
In particular, any~$(x,p,\xi) \in \R^d \times \R^d \times \R_+$ with~$\cW(x,p,\xi) \leq \mathscr{W}_R$ satisfies
\begin{equation} \label{eq:beta_bound}
    0 \leq \Id-\rme^{-(\gamma+\xi A(x))\Dt} \leq 1-\rme^{-(R+\gamma)\Dt}.
\end{equation}

We now conclude the proof by induction. We assume that~$\cW(x_n,p_n,\xi_n) \leq \mathscr{W}_R$, with~$R>0$ chosen such that the inequality holds for~$n=0$ as well. We want to prove that a similar inequality holds for~$n+1$. Note that the induction hypothesis implies that the condition~\eqref{eq:beta_bound} holds. Using \eqref{eq:lyapunov_almost_final_inequal} and applying \eqref{eq:beta_bound}, we have
\begin{align*}
\cW(x_{n+1},p_{n+1},\xi_{n+1}) & \leq (1+C K \Dt^2) \cW(x_n,p_n,\xi_n) \\
& \qquad -\varepsilon \Dt \, x_n \cdot \nabla f(x_n) - \frac{\alpha\mu \Dt}{2}\xi_n^2 - \frac{\gamma \Dt}{2} \|p_n\|^2 \\
& \qquad + \varepsilon \left(2\Dt + \left\|\Id-\beta_{n,\Dt}\right\|\right)\|p_n\|\|x_n\| \\
& \leq (1+C K \Dt^2) \cW(x_n,p_n,\xi_n) + \varepsilon \left(2\Dt + 1-\rme^{-(R+\gamma)\Dt}\right) \|p_n\|\|x_n\| \\
& \qquad -\varepsilon \Dt \, x_n \cdot \nabla f(x_n) - \frac{\alpha\mu \Dt}{2}\xi_n^2 - \frac{\gamma \Dt}{2} \|p_n\|^2 \\
& \leq (1+C K \Dt^2) \cW(x_n,p_n,\xi_n) \\
& \qquad - \varepsilon\Dt \left( x_n \cdot \nabla f(x_n) -\eta\frac{\left|2\Dt + 1-\rme^{-(R+\gamma)\Dt}\right|}{\Dt} \|x_n\|^2 \right) \\
& \qquad - \frac{\alpha\mu \Dt}{2}\xi_n^2 - \frac{\Dt}{2}\left[\gamma - \frac{\varepsilon}{2\eta} \frac{\left|2\Dt - 1+\rme^{-(R+\gamma)\Dt}\right|}{\Dt}\right] \|p_n\|^2 \\
& \leq (1-\rho \Dt + CK\Dt^2) \cW(x_n,p_n,\xi_n), 
\end{align*}
for some~$\rho>0$, upon choosing~$\eta,\varepsilon > 0$ small enough (which is possible in view of the discussion following~\eqref{eq:bound_exp_num_cv}). The latter right hand side remains smaller than~$\mathscr{W}_R$ for~$\Dt$ small enough. This shows that the induction hypothesis holds for~$n+1$ as well, and allows to conclude that it holds for all~$n \geq 0$.

In fact, the very same calculations also give the claimed exponential convergence.
\end{proof}
%

%
\section{Numerical experiments}
\label{sec:num_exper}
We concentrate in Section~\ref{sec:num_Rosenbrock} on exploring the roles of the parameters and their ranges in the case of the Rosenbrock problem, and on comparing the performance of FAD methods to linearly dissipated Hamiltonian descent. After this we turn our attention in Section~\ref{sec:num_clusters} to the application of FAD methods to the minimization of atomic clusters.

\subsection{Parameter selection in the Rosenbrock problem}
\label{sec:num_Rosenbrock}

In this section, we examine how different combinations of $\gamma$ and $\Dt$ affect convergence of LDHD as given by (\ref{eq:lindiss-1})-(\ref{eq:lindiss-2}) and mixture couplings according to the dynamics (\ref{eq:FADa})-(\ref{eq:FADb})-(\ref{eq:FADc}) and also examine how the parameters $\mu$ and $\alpha$ affect convergence. All comparisons in this first subsection are performed on the Rosenbrock function. 

When comparing LDHD to FAD schemes, we used, as mentioned, the symmetric integrator for LDHD denoted as BADAB presented in Sec.~\ref{sec:LDHD_splitting}, where D refers to the linear damping.  Contraction rates for Langevin dynamics under synchronous coupling \cite{leimkuhler2023contraction} also establish exponential convergence for BADAB (as many other discretizations) of LDHD.

We start by examining three different specific initializations and comparing the $(\gamma, \Dt)$ plots  for LDHD and four variants of mixture coupling, namely the projective mixture-coupling method of Sec.~\ref{sec:McFAD} with $[\lambda_1, \lambda_2] = [0.1,0.9], [0.5,0.5], [0.9,0.1]$ and $[1,0]$. The three initializations we examine correspond to $(x_0, y_0) = (1.5, -0.5)$, $(x_0, y_0) = (0, -2)$ and $(x_0, y_0) = (4, 4)$.
We recall that the minimum of the two-dimensional Rosenbrock is at $(x^*,y^*) = (1,1)$. The momenta, as well as the auxiliary variable $\xi$ are initialized to zero.  The graphs are colored according to the number of iterations it takes for each of the methods to reach the vicinity of the minimum (within a tolerance of~$10^{-4}$).
Each row of Fig.~\ref{fig:Rosen_gamma_h_spec_inits} corresponds to a specific initialization and each column to one of the five optimizers (LDHD and the four variants of projective mixture coupling). The maximum number of iterations has been set to 3000. The parameters $\mu$ and $\alpha$ were set to $\mu=1$ and $\alpha=1$ for the runs in Fig.~\ref{fig:Rosen_gamma_h_spec_inits}.

\begin{figure}[htbp!]
  \centering
  \includegraphics[width=5.9in]{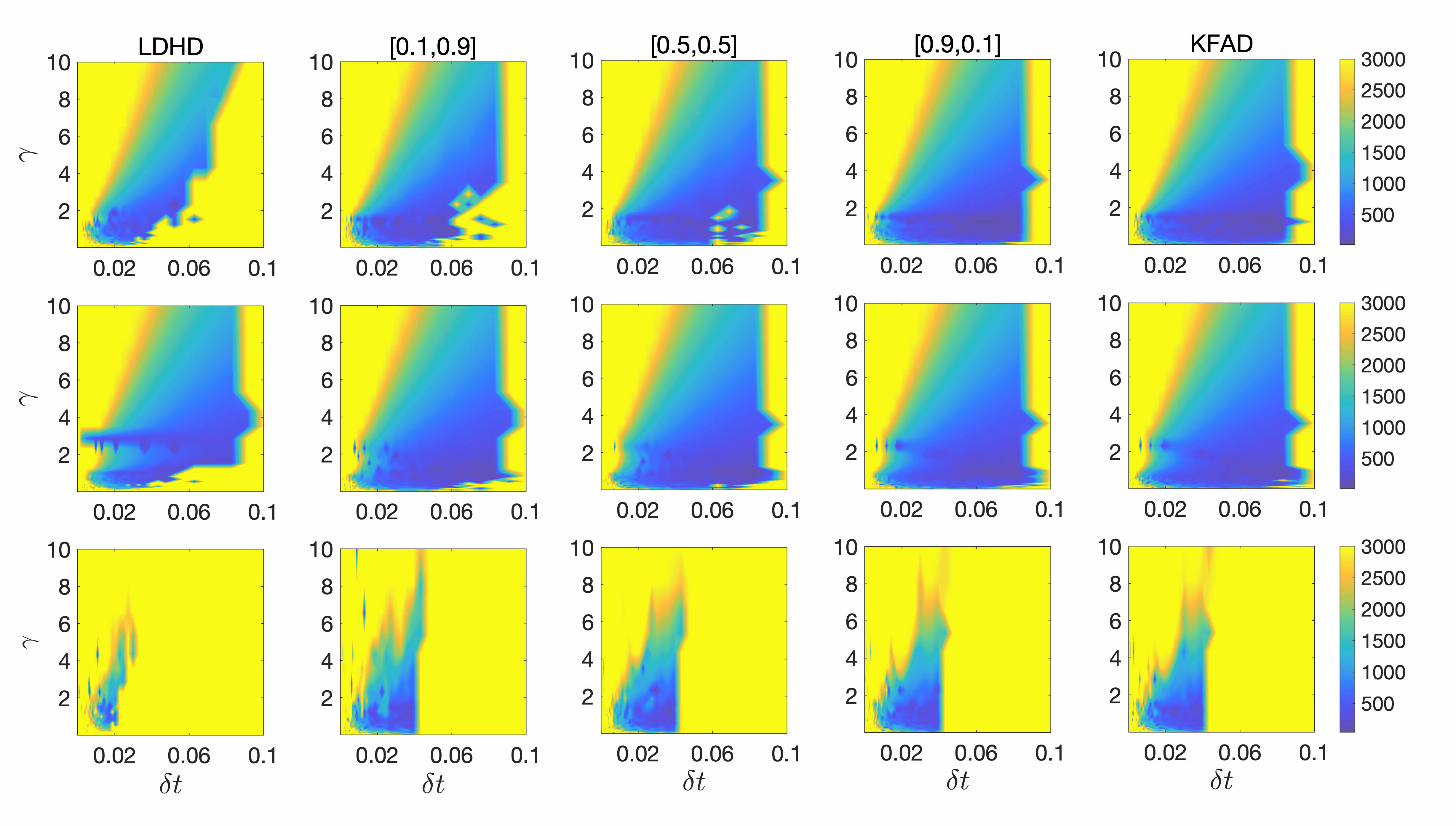}
  \caption{Comparison of number of iterations until convergence for LDHD versus projective mixture coupling. We used three different initial conditions: row 1, IC1: $(x_0,y_0)=(1.5,-0.5)$, row 2, IC2: $(0,-2)$, row 3, IC3: $(4,4)$.  The pair at the top of a column corresponds to $[\lambda_1,\lambda_2]$ in projective mixture coupling.  Each column corresponds to a different method. In all FAD variants, $\mu=\alpha=1$. The maximum number of iterations is set to 3000. The same axes and colormap range have been used for all graphs.}
  \label{fig:Rosen_gamma_h_spec_inits}
\end{figure}

As we can see in Fig.~\ref{fig:Rosen_gamma_h_spec_inits}, for the three specific initializations we examined, KFAD  is much more robust than LDHD in the sense that it allows fast convergence for a much wider range of step sizes and friction parameters. This is especially noticeable in the low friction
regime, where LDHD converges rapidly only for small stepsizes. The results also suggest that projective mixture coupling variants that are close to KFAD are faster and more robust than variants closer to FFAD. It should be noted that performance of all methods for the last initialization $(x_0, y_0) = (4,4)$ is worse, as that initialization is further away from the minimum and the high gradients and steep potential make convergence more challenging.  

Having looked at some specific initializations to get an initial comparison between LDHD and projective mixture coupling variants, we  explore $(\gamma,\Dt)$ plots, but this time averaged over the choice of initial point in the $(x,y)$ domain. In this way, we remove the effect that specific initializations may have on efficiency and can see which method performs better on average. We again set $\mu=1$, $\alpha=1$ for the runs in Fig.~\ref{fig:Rosen_gamma_h_avrg_inits_mu1_a1}. We generate a grid of $(x,y)$ pairs, where $x$ ranges between $(-2,2)$ and $y$ between $(-1,3)$. We create a $(40 \times 40)$ grid, and therefore average over $1600$ different initializations. The plots are again coloured according to the number of iterations  it takes the optimizer to reach the vicinity of the minimum. The maximum number of iterations is set to 1500 for better visualization purposes.

\begin{figure}[htbp!]
  \centering
  \includegraphics[width=5.5in]{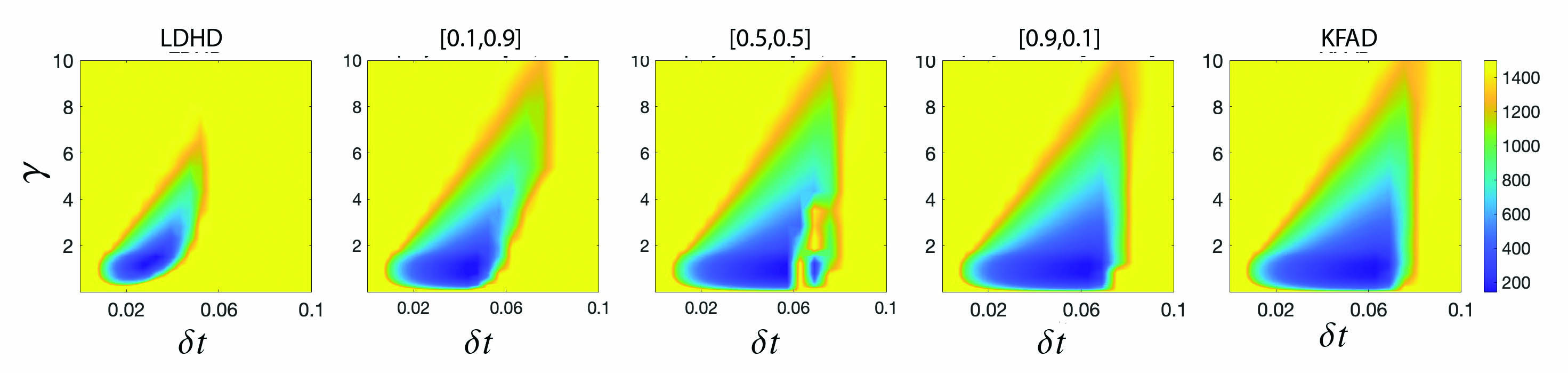}
  \caption{Comparison of number of iterations until convergence as a function of $\gamma$ and $\Dt$, for LDHD versus four different variants of projective McFAD. Each of the plots is averaged over a number of different initializations in the $(x,y)$ domain. The values of $\mu$ and $\alpha$ are fixed for all FAD plots to $\mu = 1$ and $\alpha = 1$. The maximum number of iterations is set to 1500.}
  \label{fig:Rosen_gamma_h_avrg_inits_mu1_a1}
\end{figure}
Looking at Fig.~\ref{fig:Rosen_gamma_h_avrg_inits_mu1_a1}, we see a big difference in the convergence speed of LDHD and the four variants of projective McFAD, the biggest contrast being observed between LDHD and  KFAD. We can see that KFAD allows for fast convergence for a much wider gamut of stepsize and friction. 

To get a clearer picture of what happens in the low $\gamma$ regime, we replot the results in Fig.~\ref{fig:Rosen_gamma_h_avrg_inits_mu1_a1} using a logarithmic scale in the $\gamma$ axis. We also allow the simulations to run for longer (5000 iterations as opposed to 1500 before), to make sure we capture convergence speed differences between the methods at low linear friction, which might not be apparent if we stop the runs too soon.

\begin{figure}[htbp!]
  \centering
  \includegraphics[width=5.5in]{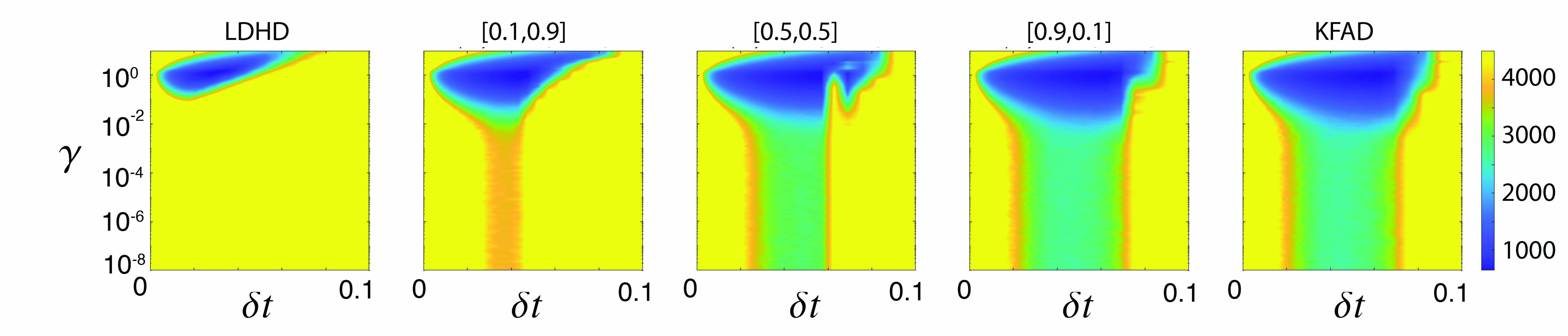}
  \caption{Number of iterations for convergence as a function of~$\gamma$ and~$\Dt$. The pair ``$[\lambda_1, \lambda_2]$'' indicates the parameters used in mixture coupling.  We use a logarithmic scale on the $\gamma$ axis to focus on the low $\gamma$ regime. As before, we use $\mu=1$ and $\alpha=1$.}
  \label{fig:Rosen_gamma_h_avrg_mu_1_alpha_1_logscale}
\end{figure}
As we can see from the results of Fig.~\ref{fig:Rosen_gamma_h_avrg_mu_1_alpha_1_logscale}, KFAD can converge in the very low $\gamma$ regime.  This behavior can be connected to the interpretation of $\xi$ as an ``adaptive" friction coefficient, which can compensate for low $\gamma$ \cite{NH1,NH3,JoLe2011}. LDHD has no such mechanism and therefore reduces, effectively, to Hamiltonian dynamics at low friction.
Finally, we reproduce the $(\gamma,\Dt)$ graphs of Fig.~\ref{fig:Rosen_gamma_h_avrg_inits_mu1_a1} in Fig.~\ref{fig:Rosen_gamma_h_avrg_mu_1_alpha_1_ratio}
where, instead of coloring according to the number of iterations it takes to reach the vicinity of the minimum, we color according to the ratio of the number of iterations it takes each of the four projective mixture coupling variants to converge over the number of iterations it takes LDHD to converge.

\begin{figure}[htbp!]
  \centering
  \includegraphics[width=5.5in]{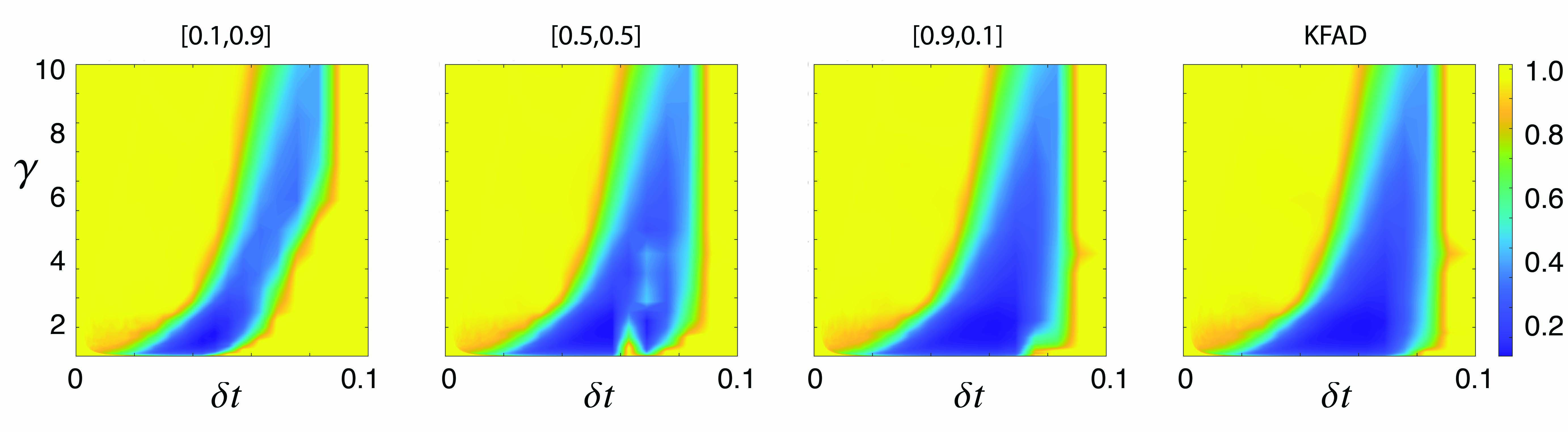}
  \caption{Ratio of number of iterations to reach vicinity of the minimum for projective mixture coupling ($\mu=1, \alpha=1$) compared to LDHD. The pair ``$[\lambda_1, \lambda_2]$'' indicates the parameters used in mixture coupling. }
  \label{fig:Rosen_gamma_h_avrg_mu_1_alpha_1_ratio}
\end{figure}
%
%
Having examined the relationship between~$\gamma$ and~$\Dt$ and how it affects convergence of LDHD and projective mixture coupling, we now proceed to examine the relationship between~$\alpha$ and~$\mu$.  

We choose a set of reasonable values for~$\gamma$ and~$\Dt$, i.e. $\gamma=1 $, $\Dt = 0.05$ and then plot the number of iterations to reach the vicinity of the minimum as a function of~$\mu$ and~$\alpha$ for projective mixture coupling with various parameters. We again average over 1600 different initializations to eliminate the effect of initialization on convergence speed. The results are reported in Fig.~\ref{fig:Rosenbrock_McFAD_mu_alpha_avrg_inits_gamma_1_h_0c05}. 

\begin{figure}[htbp!]
  \centering
  \includegraphics[width=5.5in]{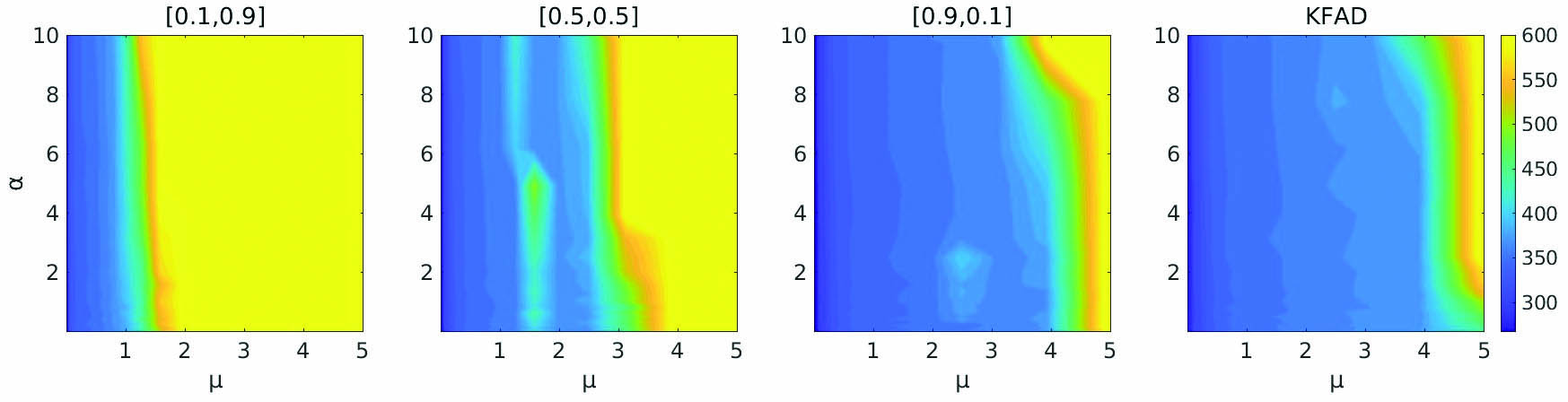}
  \caption{Number of iterations until convergence as a function of $\alpha$ and~$\mu$ for friction $\gamma=1$ and step size~$\Dt=0.05$. The contour plots are colored according to the number of iterations it takes for the four variants of mixture coupling to reach the vicinity of the minimum. The results are averaged over a range of different initializations in the $(x,y)$ domain. }
  \label{fig:Rosenbrock_McFAD_mu_alpha_avrg_inits_gamma_1_h_0c05}
\end{figure}
As we can see in Fig.~\ref{fig:Rosenbrock_McFAD_mu_alpha_avrg_inits_gamma_1_h_0c05}, there is a front beyond which performance deteriorates, but there is a wide range of $(\mu,\alpha)$ pairs on the left of the front that we can choose, resulting in fast convergence. We therefore observe that the algorithm is robust regarding the choice of $\mu$ and $\alpha$. The reason performance deteriorates for high~$\mu$ is because the limit of friction-adaptive descent as~$\mu \to \infty$ is just LDHD.  On the other hand, at fixed $\delta t$, and for small $\mu$, the tight coupling between kinetic energy and $\xi$ will introduce instability in the numerical method.  Thus we recommend  values of $\mu$ that are bounded in a moderate interval around unity. This is reminiscent of a similar recommendation for Adaptive Langevin dynamics~\cite{LeSaSt2020}.

Our primary objective so far has been to demonstrate that FAD can lead to much faster convergence compared than LDHD. We are not necessarily aiming to show that FAD outperforms other popular optimization methods. It is however be interesting to see how KFAD compares to Adam \cite{kingma2014adam}. We formulate Adam as a system of ODEs, similarly to \cite{da2020general}. The continuous equations we use for Adam are
\begin{align}
    \dot{x} &= \frac{p}{\sqrt{\zeta}+\epsilon}, \label{eq:adam_x} \\
    \dot{p} &= -\nabla f(x) -\gamma p,  \label{eq:adam_p} \\
    \dot{\zeta} &=  \nabla f(x)^2 - \alpha \zeta,  \label{eq:adam_zeta},
\end{align}
where $\zeta \in \mathbb{R}^N$ and $\nabla f(x)^2$ denotes element-wise squares.
We use the following splitting method to discretize the dynamics~\eqref{eq:adam_x}-\eqref{eq:adam_zeta}:
\begin{equation}\label{eq:Adam_splitting}
\begin{pmatrix}\dot{x} \\ \dot{p} \\ \dot{\zeta} \end{pmatrix}
 =\underbrace{\begin{pmatrix} \frac{p}{\sqrt{\zeta}+\epsilon}\\0\\0 \end{pmatrix}}_{\text{A}} + 
  \underbrace{\begin{pmatrix}0\\-\nabla f(x)\\0 \end{pmatrix}}_{\text{B}} +
  \underbrace{\begin{pmatrix}0\\0 \\  \nabla f(x)^2 - \alpha \zeta \end{pmatrix}}_{\text{C}}+
  \underbrace{\begin{pmatrix}0\\- \gamma p\\ 0 \end{pmatrix}}_{\text{D}} .
\end{equation}

Parts B and D are discretized as explained in section \ref{sec:numerical_schemes}. Step A can be solved exactly as $x_{n+1} = x_n + \delta t \frac{p_n}{\sqrt{\zeta_n}+\epsilon}$. For step C we have $\zeta_{n+1} = \mathrm{e}^{-\alpha \delta t} \zeta_n + \frac{F^2}{\alpha} (1-\mathrm{e}^{-\alpha \delta t})$.

We choose $\alpha=1$ for Adam and $\alpha=1$, $\mu=1$ for KFAD. We set the maximum number of iterations to 2500 and average over 1600 different initializations. 
The plots in Figs.~\ref{fig:adam_ldhd_kfad_gamma_h_Rosen_2d} and~\ref{fig:adam_ldhd_kfad_gamma_h_Rosen_2d_log} are colored according to the number of iterations it takes for the optimizer to reach the vicinity of the minimum. The horizontal axis corresponds to the step size $\delta t$ and the vertical axis to the friction $\gamma$. Fig.~\ref{fig:adam_ldhd_kfad_gamma_h_Rosen_2d_log} is the same as Fig.~\ref{fig:adam_ldhd_kfad_gamma_h_Rosen_2d}, but with a logarithmic scale on the $\gamma$ axis. This allows us to see that for the 2D Rosenbrock function, KFAD is more efficient than both LDHD and Adam in the low $\gamma$ regime. Adam on the other hand demonstrates faster convergence for the high $\gamma$/low step size regime.

\begin{figure}[htbp!]
    \centering
    \includegraphics[scale=0.27]{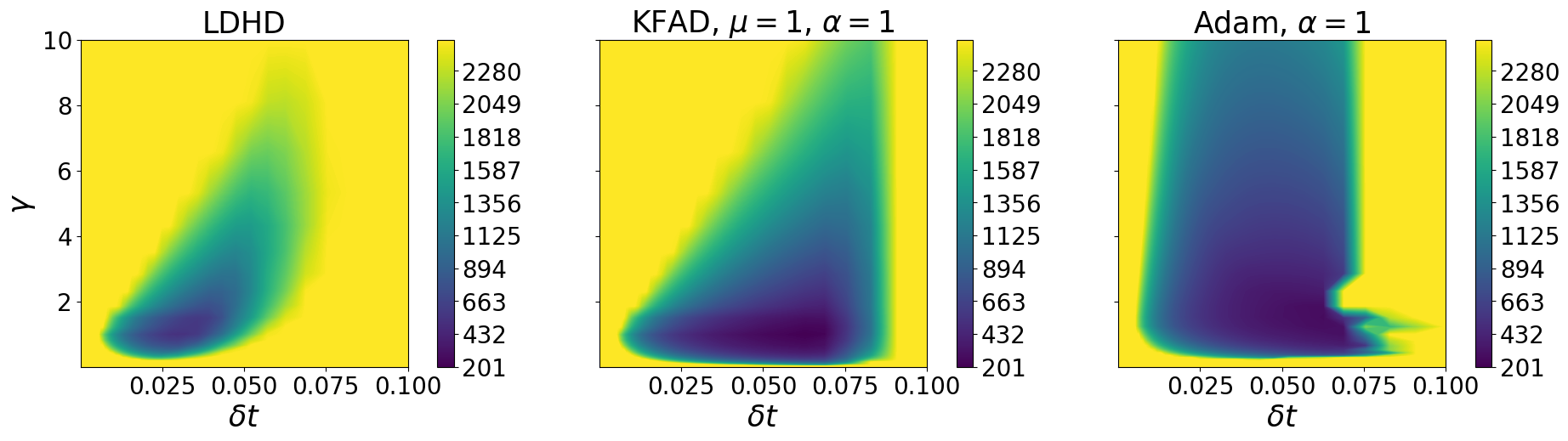}
    \caption{Number of iterations for LDHD, KFAD and Adam to reach the vicinity of the minimum of the 2D Rosenbrock function as a function of $\gamma$ and $\delta t$. Results are averaged over 1600 different initializations.}
    \label{fig:adam_ldhd_kfad_gamma_h_Rosen_2d}
\end{figure}
\begin{figure}[htbp!]
    \centering
    \includegraphics[scale=0.27]{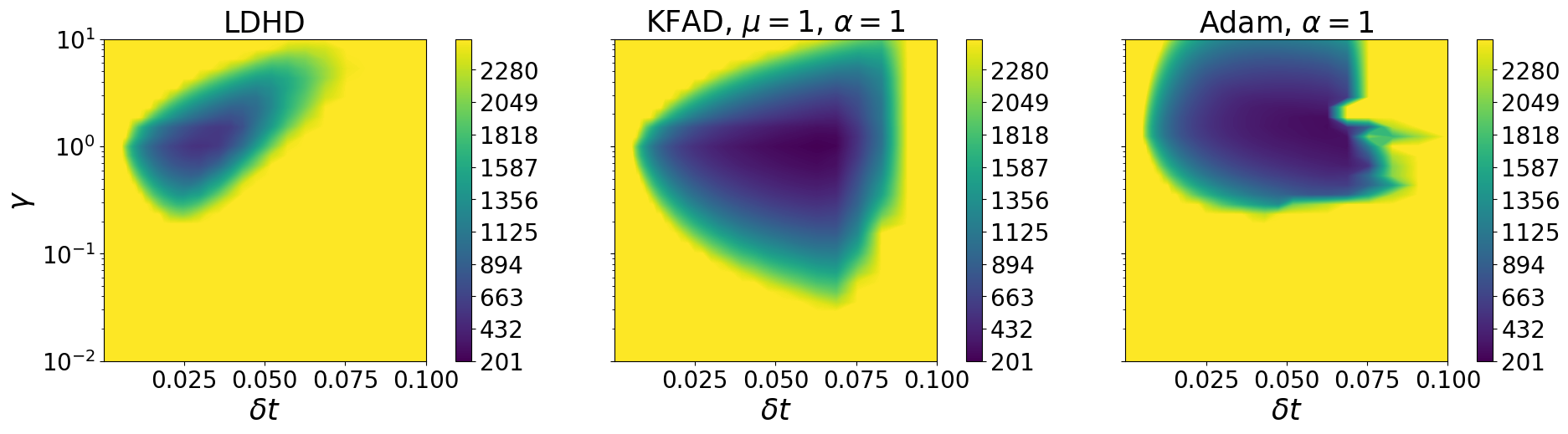}
    \caption{Number of iterations for LDHD, KFAD and Adam to reach the vicinity of the minimum function as a function of $\gamma$ and step size $\delta t$. Results are averaged over 1600 different initializations. We use a logarithmic scale on the $\gamma$ axis for better visualization of the low-$\gamma$ regime. We can see that KFAD is faster than both LDHD and Adam in the low-$\gamma$ region.}
    \label{fig:adam_ldhd_kfad_gamma_h_Rosen_2d_log}
\end{figure}

To make visual comparison of the three methods easier, we show the ratio of iterations it takes for KFAD to converge over those of Adam in Fig.~\ref{fig:adam_kfad_gamma_h_Rosen_2d_ratios}. We see that there is a green and yellow region, where KFAD is between two to four times slower than Adam. There are also two regions, a horizontal band for low $\gamma$'s and a vertical band for higher step sizes  where KFAD is between three and ten times faster than Adam. We can see that KFAD allows the use of higher step sizes and allows for fast convergence in the high high step size regime for a wide range of $\gamma$'s.
\begin{figure}[htbp!]
    \centering
    \includegraphics[scale=0.29]{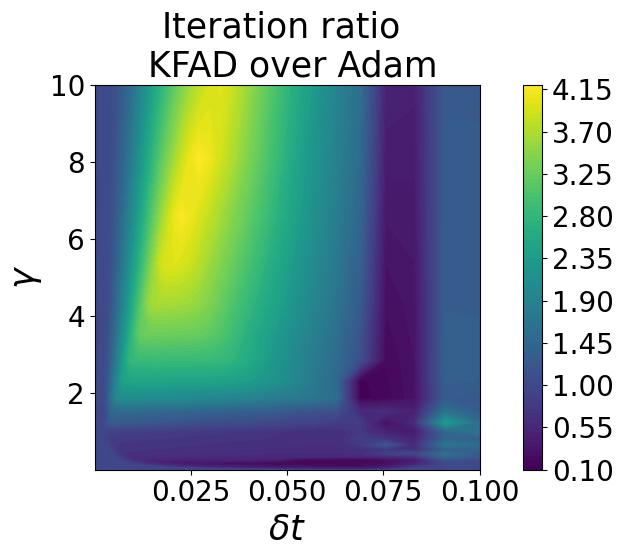}
    \caption{Ratio of number of iterations it takes to reach the vicinity of the minimum for KFAD ($\mu=1, \alpha = 1$) compared to Adam ($\alpha = 1$).}
    \label{fig:adam_kfad_gamma_h_Rosen_2d_ratios}
\end{figure}

We would like to stress again that it was not our aim to necessarily compete with Adam, rather we wanted to show that FAD methods are competitive with momentum SGD and to demonstrate how cubic damping can be used as an effective mechanism to suppress oscillations and speed up convergence. In fact, it is possible to blend the ideas underpinning Adam with the ones of FAD in order to hopefully get the best features of both algorithms. We are currently working on this.
%
\subsection{Lennard--Jones and Morse Clusters}
\label{sec:num_clusters}
As an interesting and nontrivial test problem we have considered the optimization of atomic clusters consisting of $N$ atoms moving in $\R^3$ and interacting in a pairwise (distance-based) potential.  The total potential energy is
\[
U(q) = \sum_{i=1}^{N-1} \sum_{j=i+1}^N \varphi(\|q_i-q_j\|),
\]
where $q_i$ represents the three component vector of position coordinates of the $i$th atom.  This problem has been extensively studied as a test case for global optimization and many specialized methods have been developed \cite{wales}. A significant challenge is the high dimensionality of these problems and the proliferation of local minima, in some cases nearly at the same depth as the global one.  As our first test case we used a system of 38 atoms in the Lennard--Jones pair potential defined by 
\[
\varphi_{\rm LJ}(r) = 4 \left( r^{-12} - r^{-6} \right).
\]
We consider here the optimization for initial conditions taken in the vicinity of the (unique) global minimum which is unique up to rotations and translations.

In this exploration, we attempted again to use FFAD and various mixture coupling methods, but found them, as for the Rosenbrock function, to be of little benefit in terms of efficiency. It should be clarified here that while FFAD was effective in damping oscillations, as was demonstrated for example in Figs.~\ref{FFADvKFAD} and~\ref{McFAD}, the very mechanism that facilitated the damping of oscillatory modes seemed to have an adverse effect on convergence speed, which is what we mean when referring to efficiency (see the discussion after~\eqref{eq:F_eff}). Therefore our focus here is on KFAD and its performance relative to LDHD.

To generate an initial condition for the optimizer, we proceeded as follows.  We started at the known (tabulated) minimum energy configuration.  We then applied stochastic dynamics (Langevin dynamics) for a short period (400 steps, $\delta t=0.001$, $\beta^{-1} =2$ with $\beta$ proportional to the inverse temperature, similarly to~\eqref{nh-3}) producing a new state with a significantly higher energy.  The challenge then for optimization is to return to the ground state.   An example starting structure can be seen in  the left panel of Fig.~\ref{fig:LJ38clusters}. It can be compared with the right panel which shows the highly ordered minimum energy cluster.

\begin{figure}[htbp!]
  \centering
  \includegraphics[width = 2in]{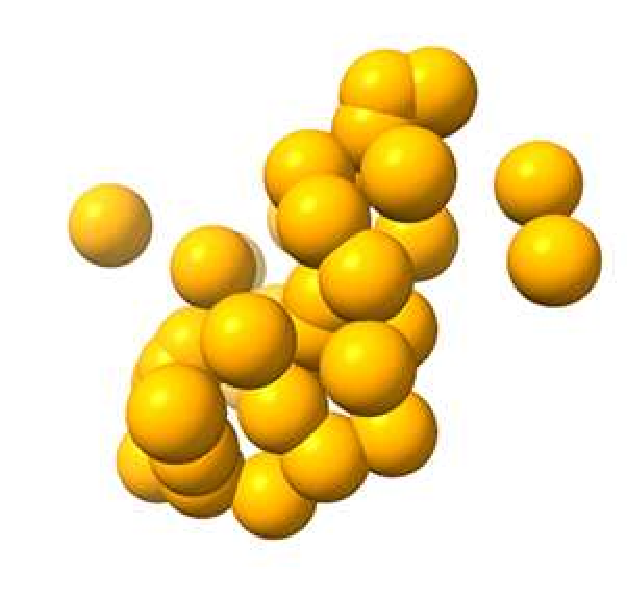}\hspace{0.3in} \includegraphics[width = 2in]{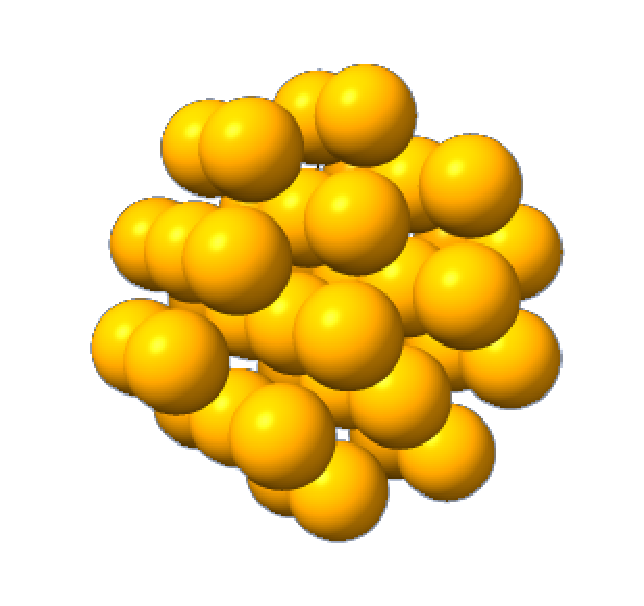}
  \caption{Instances of 38 atom clusters for Lennard--Jones interaction potential.  Left: starting state for optimization tests, energy = $-141.77$. Right: global minimizer, energy = $-173.91$.}
  \label{fig:LJ38clusters}
\end{figure}

\subsubsection{Optimal choice of $\alpha$ and $\mu$}
We tuned the artificial parameters $\alpha$ and $\mu$ and the friction $\gamma$ of KFAD by considering various choices of all three.  We found that KFAD performed best with small values of $\gamma$ and with moderate values of $\mu$ and $\alpha$.  It was somewhat sensitive to the selection of these two parameters but less so than LDHD was to the choice of the linear damping~$\gamma$. We show one series in Fig.~\ref{fig:KFAD_alpha_mup01} which illustrates the convergence at $\mu=0.1$ for various values of~$\alpha$.

\begin{figure}[htbp!]
  \centering
  \includegraphics[width = 5in]{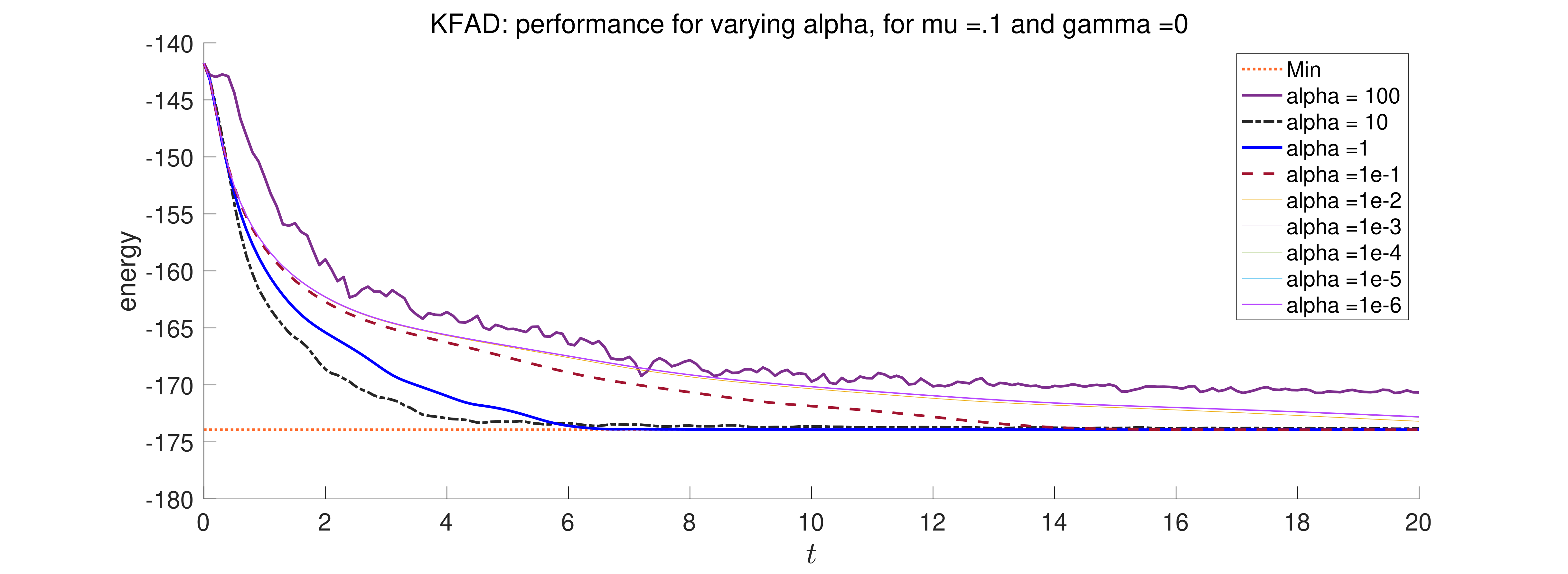}
  \caption{Lennard--Jones 38: exploration of the optimal choice of $\alpha$ for $\mu =0.1$. The objective function is reported as a function of the physical time.}
  \label{fig:KFAD_alpha_mup01}
\end{figure}

When $\mu=0.1$, convergence is seen for all values of $\alpha<100$ and is particularly rapid for $\alpha =1$ or $\alpha =10$.   

\subsubsection{Exploration of LJ75}
Having investigated the parameters, we considered direct comparisons between KFAD and LDHD on a specific challenge related to the 75-atom system. The minimum for $N=75$ is non-icosahedral, with energy~$-397.492331$ (see~\cite{DoWaBe1995}).  
We again followed the standard practice of initializing molecular optimization from Langevin samples.  From the minimizer we allowed 400 steps of Langevin dynamics (BAOAB scheme~\cite{LeMa2013}, $\beta=0.5$, stepsize $\Dt=0.001$ with the friction coefficient in Langevin set to~$\gamma=20$).   The result of this process was an initialization at an energy between approximately~$-260$ and~$-230$, significantly higher than the global minimum.

We found that the previous parameter testing in the case of LJ38 was transferable to the larger molecule.
We fixed the parameters of KFAD at $\mu=0.1$, $\alpha=10$, $\gamma = 10^{-5}$ and considered various choices of friction in LDHD.  We found that LDHD required some tuning of the friction and specifically that friction needed to be greater than~1.  We allowed 2000 steps for optimization by each scheme.

In our experiments we observed that the performance of KFAD was relatively independent of the momentum initialization.  In our first set of experiments, we initialized both KFAD and LDHD using the thermalized momenta corresponding to $\beta = 0.5$ which were obtained during the initial equilibration phase.  This is certainly not an uncommon initialization strategy, but we found that for LDHD the results obtained using these momenta were very poor.  In most cases the resulting runs did not produce approximations to the minimum.

We show in Figs.~\ref{fig:minima} and~\ref{fig:local} a comparison of the energies achieved. KFAD reached the global minimum in more than 83\% of runs.  For no value of linear damping did LDHD find the minimizer in more than 15\%  of trials; in most cases the performance was much worse.  

\begin{figure}[htbp!]
  \centering
  \includegraphics[width = 5in]{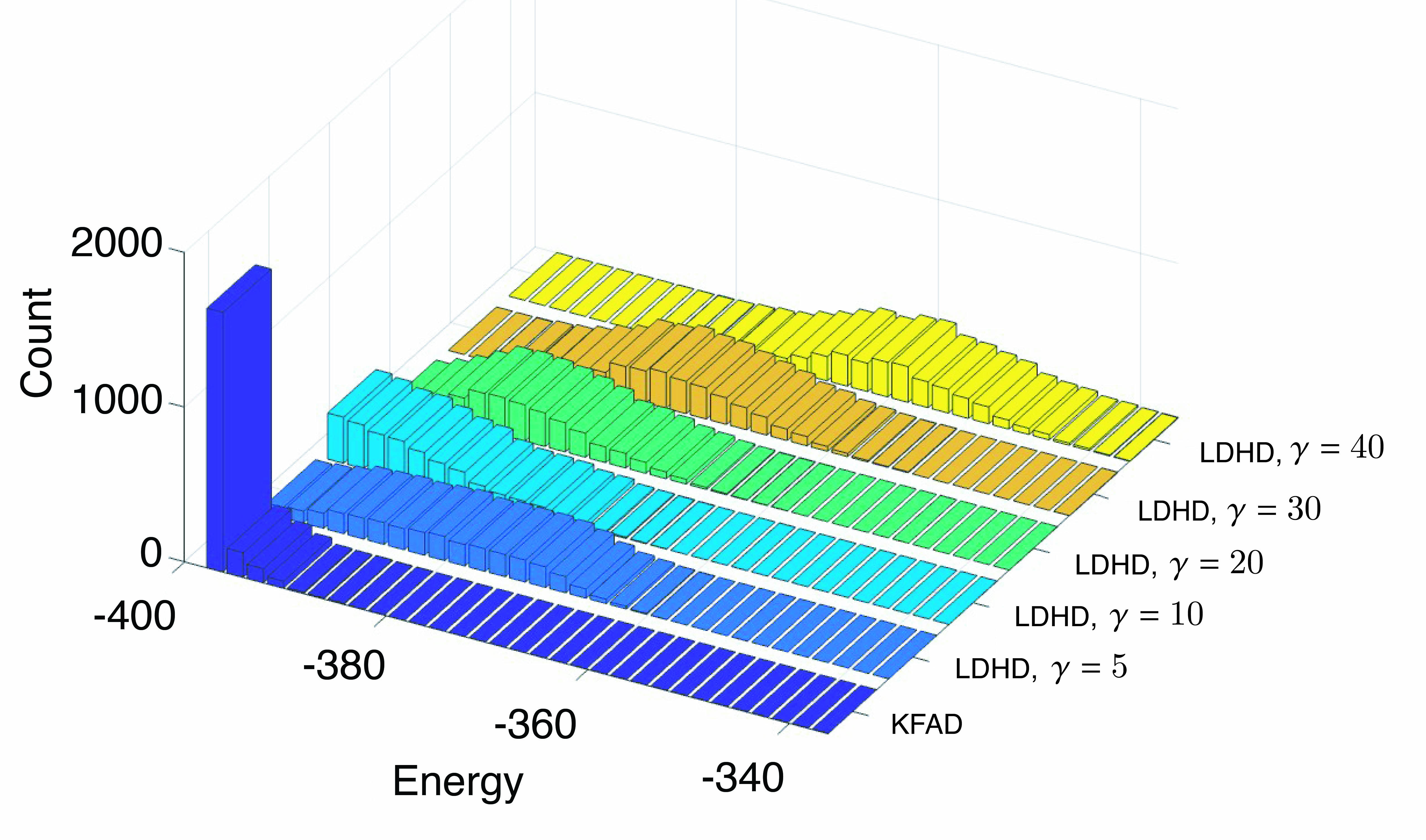}
  \caption{Minimizers for runs of the LJ-75 system initiated from the canonical ensemble.  The comparison shows the states obtained by LDHD (with various linear damping coefficients) and KFAD, arranged by energy level. }
  \label{fig:minima}
\end{figure}

\begin{figure}[htbp!]
  \centering
  \includegraphics[width = 4in]{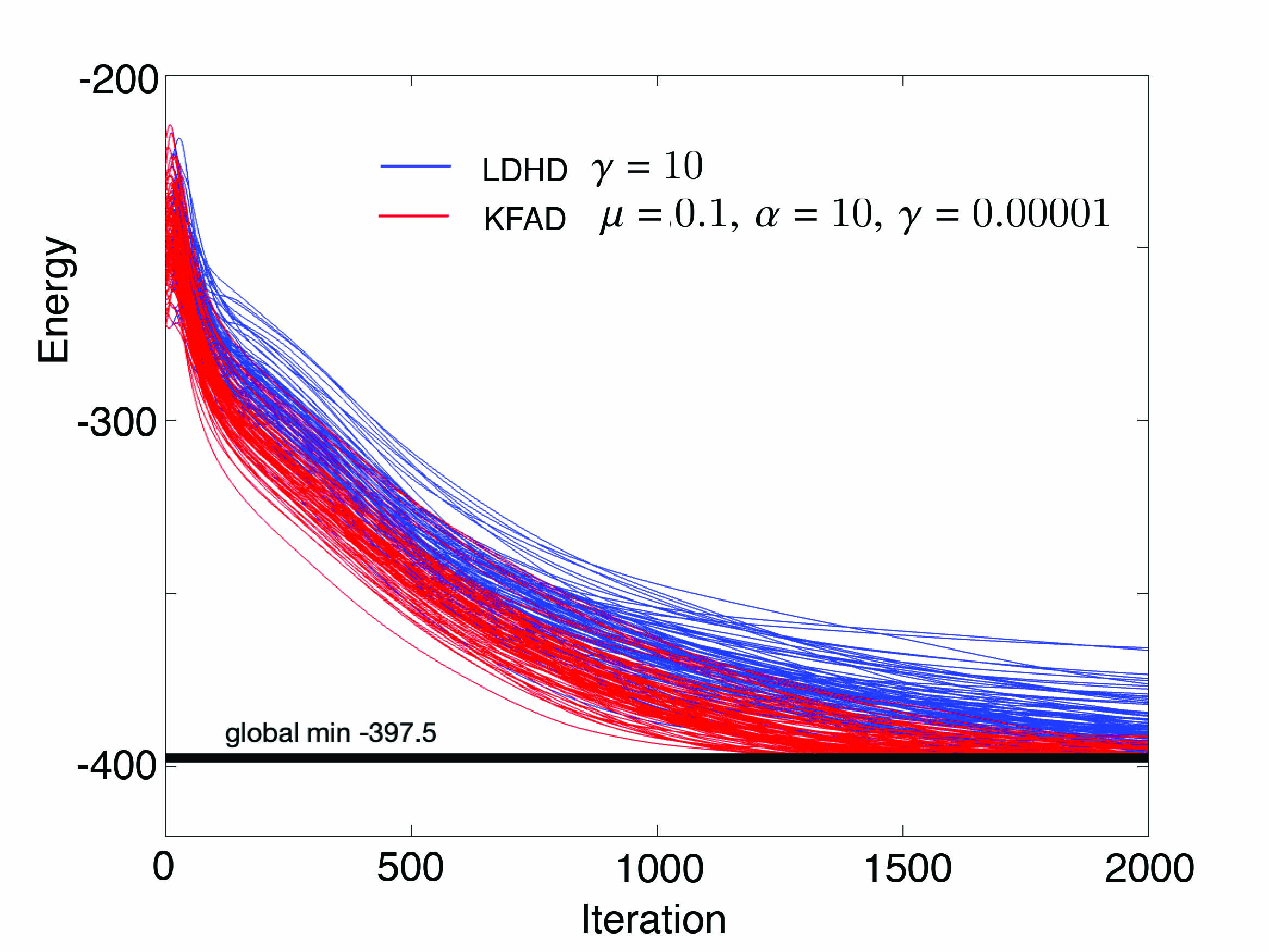}
  \caption{Optimization paths show the decay of energy for LJ-75 in a typical case with momenta initialized from the canonical ensemble.  We see that LDHD is typically significantly slower in descent and ends at higher values than in the KFAD runs.}
  \label{fig:local}
\end{figure}

These results suggest that KFAD is capable of controlling the descent process even in the setting that it starts with a high kinetic energy, which is perhaps not surprising given that it has an in-built kinetic energy control.   We then initialized the system in a different way. After producing a canonical sample using Langevin dynamics as above, we simply set the momenta to~0.  In this setting, we rely on properties of the potential surface to
guide the state of the system toward the minimum.   When started in this way, the results obtained by LDHD were strikingly different; see Fig.~\ref{fig:minima2}. With friction properly tuned, LDHD found the minimum rapidly (as rapidly as KFAD) and in 93\% of cases, whereas KFAD only succeeded in around 77\% of runs.  This suggests that standard dissipation mechanisms are able to find nearly optimal paths in this example.  Increasing the distance to the minimum (by longer
equilibration) typically resulted in both methods getting stuck in an elevated energy state. 

\begin{figure}[htbp!]
  \centering
  \includegraphics[width = 5in]{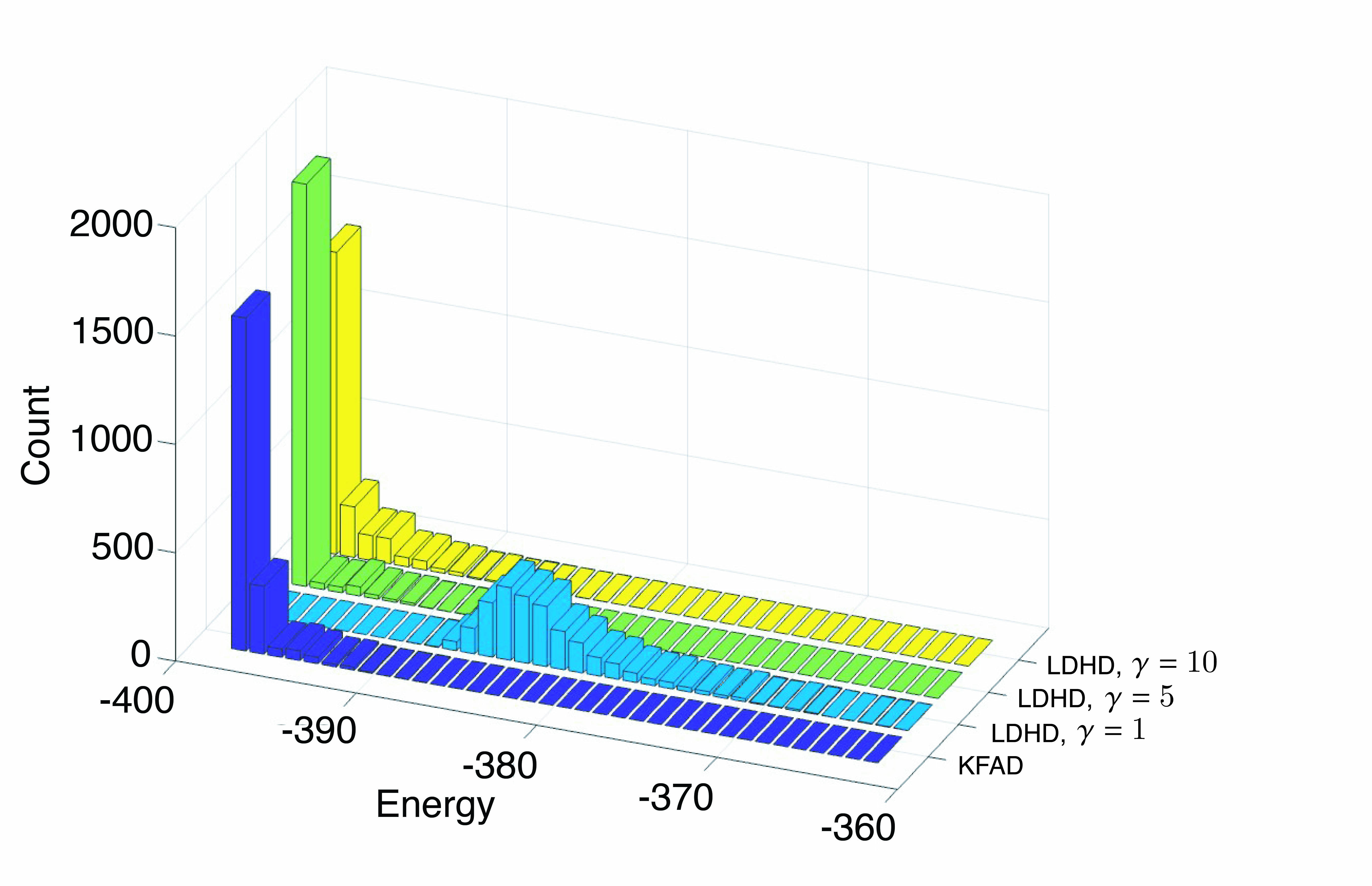}
  \caption{Minimizers for runs of the LJ-75 system initiated by zeroing the momenta at the start. }
  \label{fig:minima2}
\end{figure}

\subsubsection{Approximate global minimization of a Morse cluster}
Although the results for LJ clusters initialized near the global minimum were  inconclusive, 
we attribute this to the steepness of the Lennard--Jones pair potential and the consequent strong propensity for trapping into a single, nearly-harmonic basin.  We next turned our attention to a Morse cluster in which the pair potential is defined by
\[
\varphi_{\rm Morse}(r) = \left(1-\rme^{-a(r-r_0)}\right)^2.
\]
Such clusters are discussed in depth in several articles in the chemistry literature \cite{DoWaBe1995,WaDo1996,doye1996effect,doye1997structural, miller1999structural}. Morse potentials were also extensively used in \cite{wales2001microscopic} to study why long-range potentials
give rise to simpler landscapes.  Using the Morse potential in place of the Lennard-Jones potential undoubtedly changes the problem, but the Morse cluster is still regarded as a difficult optimization problem.

For modest values of~$a$, the propensity for trapping is reduced compared to Lennard--Jones and it is possible to consider the use of FAD methods for
(approximate) global optimization.  We used $a=3$ and $r_0=1$ which is one of the cases studied by Doye, Wales and Berry~\cite{DoWaBe1995}.    We initialized a system of 64 atoms by simply placing the atoms on the vertices of a Cartesian lattice $(x_i,y_i,z_i)$ of the form $\{0,1,2,3\} \times \{0,1,2,3\} \times \{0,1,2,3\}$.  This leads to a rather large initial energy.  We then added an equilibration phase using Langevin dynamics to randomize the starting point (1000 steps, stepsize $\delta t=0.001$, $\beta=0.1$).  After this we applied various optimization methods.  The initialization procedure results in a very high starting energy in~$[-50,0]$, compared to the global optimum energy of~$-512.83$. In this setting we found it did not matter how we initialized momenta, so we set the momenta to zero for simplicity.

We explored the choice of stepsize.  Choosing the right stepsize is nontrivial. There is a significant range below the stability threshold where the LDHD method results in a poor accuracy, regardless of the friction used.  Even though LDHD appeared to be stable (in the sense of bounded energy) for $\delta t <0.1$ the results were much better with LDHD if $\delta t<0.05$, whereas KFAD was less sensitive to the choice of~$\Dt$.  This probably is due to 
the LDHD paths ``popping out'' of some basins or missing some relevant pathways during minimization.   The fact that we can, at least in some cases, use large steps and still obtain accurate minima using KFAD is a favorable feature.    In LDHD,  the optimal friction was found to lie around $\gamma =1$; at larger stepsize, higher friction (e.g. $\gamma \approx 2$) gave better results, suggesting that the friction was compensating for numerical error.  We found that $\alpha=\mu=1$ gave the best results in KFAD, although the choice was not critical, and in KFAD, the optimal linear damping was $\gamma=0$, so all damping comes from the use of the auxiliary term.  The use of zero friction in the Morse cluster example stands in stark contrast to our observation that a moderate friction is needed in KFAD when it is applied to the Rosenbrock function.

Unlike in the case of the local minimization of Lennard--Jones we considered earlier, in this optimization problem we need relatively long trajectories.  We initially used 50,000 steps in each run.  The results shown in Fig.~\ref{fig:morse} clearly demonstrate that in this case the KFAD method can reach an energy near to that of the global minimum, with high reliability, whereas the LDHD method fails in most cases.  We did not pursue the challenge of specifically finding the global minimum, which would require additional machinery, since our interest here was simply the comparison of native LDHD to KFAD.

\begin{figure}[htbp!]
  \centering
  \includegraphics[width = 5.8in]{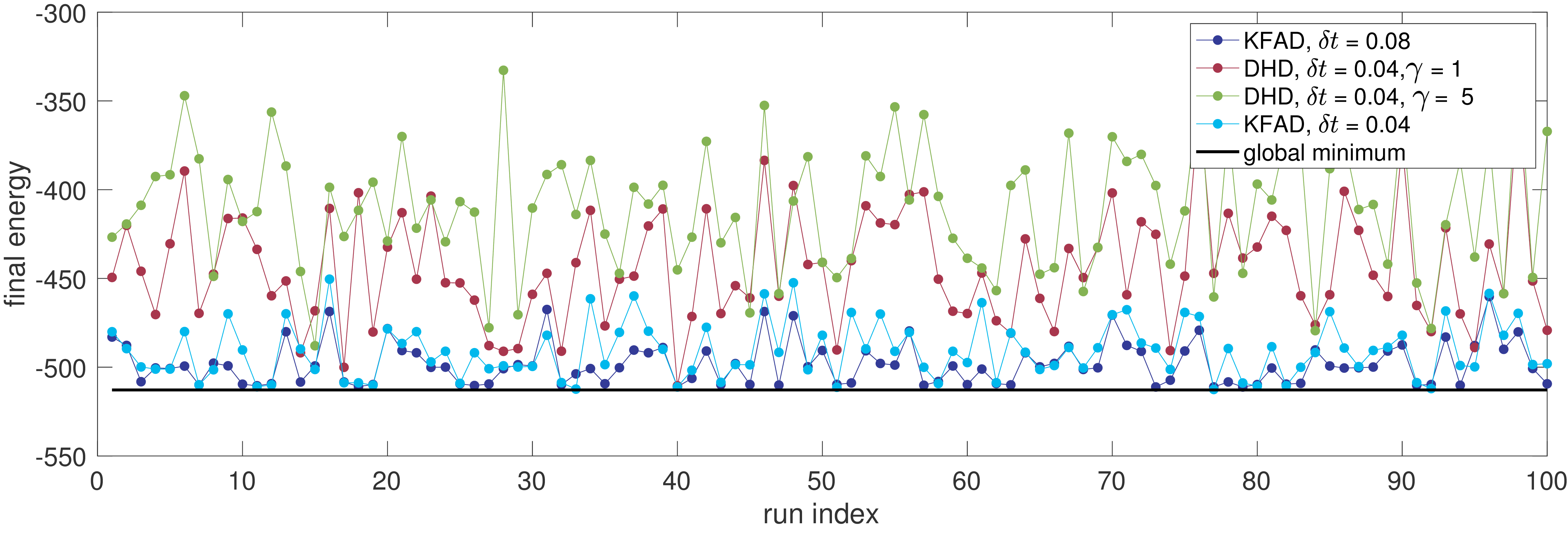}
  \caption{Energies obtained using LDHD and KFAD for the 64 atom Morse cluster.  The KFAD method approaches the minimum energy in almost all runs, whereas LDHD, for all choices of friction tested, does not come close. Results are shown for KFAD at stepsizes of both $\delta t=0.04$ and $\delta t=0.08$, but LDHD only for the smaller stepsize since its performance was much worse at larger stepsize.  Moreover, in KFAD, the larger stepsize gave visible improvements over the smaller one, suggesting that exploration was enhanced.}
  \label{fig:morse}
\end{figure}

We next conducted several series of 100 runs with each of three stepsizes and three choices of $\gamma$ (in LDHD); in this case taking~$10^5$ steps each time.  We have graphed all the obtained minimum energies in each series in Fig.~\ref{fig:morse_pars100K}, along with the mean of all the runs of the series.  
\begin{figure}[htbp!]
  \centering
  \includegraphics[width = 5in]{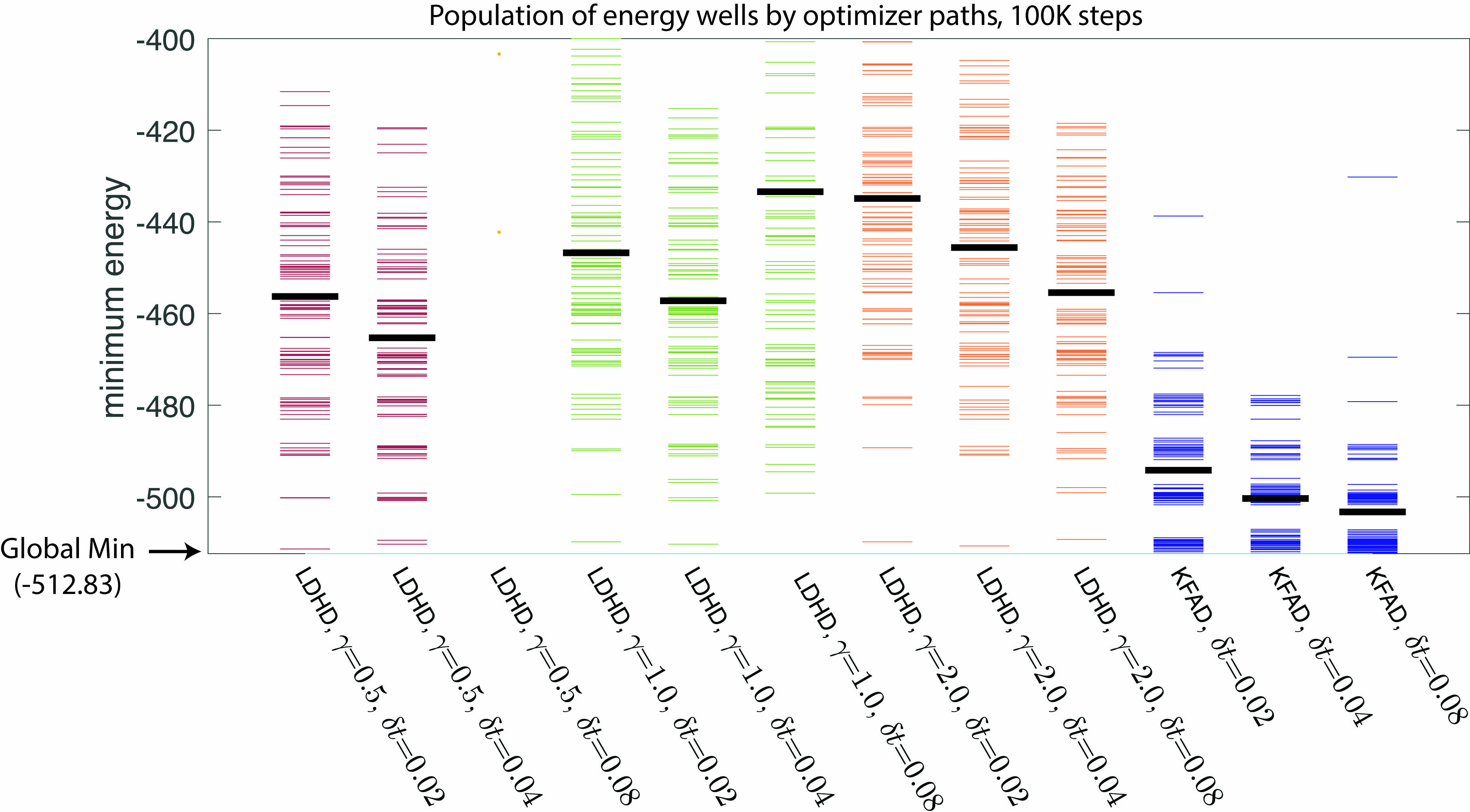}
  \caption{A diagram comparing minimum energies obtained for the 64 atom Morse cluster in long runs, for different choices of stepsize and, for LDHD, for different choices of friction. Parameters for KFAD were $\alpha=\mu=1$ and $\gamma=0$.  The solid black bars show the means of all the minima of the column.  The graph shows that the minima achieved by KFAD are overall much lower than those achieved by LDHD for various parameters.  We also see that KFAD obtains improved accuracy when using a large stepsize ($\delta t=0.08$) for which LDHD would be inaccurate.}
  \label{fig:morse_pars100K}
\end{figure}

In this experiment, we see that KFAD trajectories appear to  accumulate in lowest energy basins, whereas LDHD paths appear to get stuck at elevated energies.  To test this hypothesis we reran the previous set of experiments using~$10^6$ steps in every run to see how the figure changed.  These results are reported in Fig.~\ref{fig:morse_pars1M}.   In this example, it appears that KFAD paths are finding transitions between states that are rarely found by dissipated Hamiltonian descent.  At the largest stepsize, most of the minimizers are at (or within 1\% of) the global minimum.

\begin{figure}[htbp!]
  \centering
  \includegraphics[width = 5in]{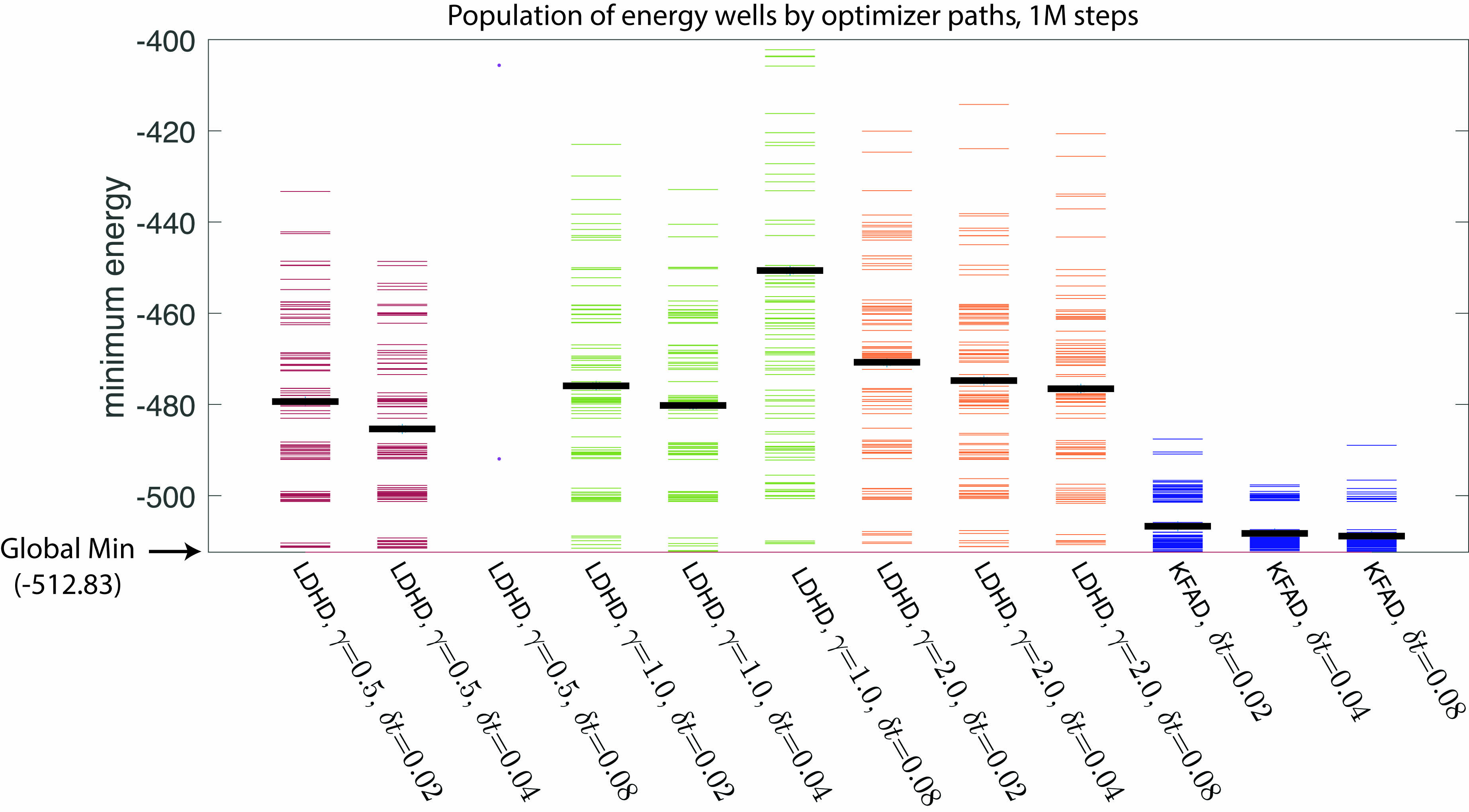}
  \caption{The energies of the 64 atom Morse cluster obtained in runs of~$10^6$ steps (compare with Fig.~\ref{fig:morse_pars100K}).  We call attention to the fact that for $h=0.08$ in KFAD (last column), nearly all the trajectories that had previously landed in the well at depth around~$-500$ have moved into the minimum energy state (or very near of it).}
  \label{fig:morse_pars1M}
\end{figure}

A  plausible explanation for the substantial improvement of KFAD over LDHD in this case is the following.  Compared to LDHD, the KFAD method, which is used here without linear damping (i.e. $\gamma=0$), has very mild damping in the later stages of minimization (when~$\|p\|$ is small).  Compared to LDHD,  KFAD looks more like Hamiltonian dynamics at low kinetic energy.   This increases the ability of the method to continue to explore the low-lying minima of the potential landscape.  We plan to further study this issue in application settings in subsequent work.

\section{Conclusions}
%
We have considered an alternative to dissipated Hamiltonian descent as a scheme for local optimization of smooth potential functions based on Nos\'{e}--Hoover dynamics and its generalizations; these methods embed Hamiltonian dynamics in a framework including one or more auxiliary variables.  We have only explored some of the simplest methods that can be built on this foundation and have shown convergence, both for the dynamics itself, and for simple discretizations of the dynamics.  

We have seen that although the FAD methods are related to nonlinear (cubic) damping, we did not work in that parameter regime.  The tuning we have done of $\alpha$ and $\mu$ in our numerical studies showed that optimal values of the parameters were moderate, typically of order~1. Thus it appears that the flexible relaxation of $\xi$ is an important part of the performance enhancement we obtained.

In terms of practical application, it seems the KFAD scheme, which is particularly simple to implement and which introduces virtually no computational overhead, is already of great potential utility.  The experiments on Rosenbrock systems hint at the robustness of these methods.  The efficiency of the methods was also demonstrated for more challenging examples, including a 192 degree-of-freedom Morse cluster, with highly anharmonic potential landscape, where the KFAD method was able to reach near-global energy minimizing states.  For this global optimization problem there may of course be other schemes besides LDHD which would make more realistic comparisons, but we were impressed that a pure dynamics-based method could find global optima without the trapping clearly visible in the LDHD runs (note in particular the results for the Morse cluster with~$10^6$ steps and $\delta t =0.08$, where 86\% of the miminizers obtained have energies within 1\% of the global minimum.).

Although the FFAD method showed a strong benefit in terms of effectiveness in suppressing oscillations in initial experiments with the Rosenbrock problem, it generally performed worse than KFAD. This is due to the nature of the FFAD dynamics, which as we explain does reduce oscillations, but also tends to slow down descent towards the minimum. We suspect that the idea of adding some directed cubic damping  will nonetheless prove useful in some other settings.

We have barely scratched the surface of this method-family and look forward to gaining further insight into the range of application for FAD methods in future studies.  Of particular interest are activated (driven) systems in which the optimization is continually disturbed by time-dependent forcing and systems with stochastic forcing (e.g stochastic gradients, as arise in deep learning).  We are currently exploring these models in follow-on studies.

\label{sec:conclusions}



\section*{Acknowledgments}
The work of AK and BL was funded by the Engineering and Physical Sciences Research Council Grant EP/S023291/1 (MAC-MIGS Centre for Doctoral Training) and EP/R014604/1 (Isaac Newton Institute), specifically funding provided by its Data-Driven Engineering programme (Spring 2023). The work of AK was also supported by IBM Research. The work of GS was funded by the European Research Council (ERC) under the European Union's Horizon 2020 research and innovation programme (project EMC2, grant agreement No 810367), and by Agence Nationale de la Recherche, under grants ANR-19-CE40-0010-01 (QuAMProcs) and ANR-21-CE40-0006 (SINEQ).  BL acknowledges helpful discussions with D. Wales regarding the atomic clusters of Sec. 6.
\bibliographystyle{amsplain}
\bibliography{references}

\end{document}